\newcommand{\crefcomma}[1]{%
  \begingroup
    \def\crefcomma@sep{}%
    \forcsvlist{\crefcomma@do}{#1}%
  \endgroup
}
\newcommand{\crefcomma@do}[1]{%
  \ifx\crefcomma@sep\@empty\else,~\fi
  \cref{#1}%
  \def\crefcomma@sep{,}%
}
\newcommand{\Crefcomma}[1]{%
  \begingroup
    \def\crefcomma@sep{}%
    \forcsvlist{\Crefcomma@do}{#1}%
  \endgroup
}
\newcommand{\Crefcomma@do}[1]{%
  \ifx\crefcomma@sep\@empty\else,~\fi
  \Cref{#1}%
  \def\crefcomma@sep{,}%
}
\DeclareMathAlphabet{\mathpzc}{OT1}{pzc}{m}{it}
\newcolumntype{L}{>{$}l<{$}}
\crefname{hypothesis}{Hypothesis}{Hypotheses}
\title{Asymptotic expansions for solutions of differential equations having a coalescing turning point and double pole, with an application to Legendre functions}
\author{T. M. Dunster\thanks{Department of Mathematics and Statistics, San Diego State University, 5500 Campanile Drive, San Diego, CA 92182-7720, USA. 
  (\email{mdunster@sdsu.edu}, \url{https://tmdunster.sdsu.edu}).}
  }
\newcommand*{\addFileDependency}[1]{
  \typeout{(#1)}
  \@addtofilelist{#1}
  \IfFileExists{#1}{}{\typeout{No file #1.}}
}
\begin{document}
\maketitle

\begin{abstract}
The asymptotic behavior of solutions to the second-order linear differential equation $d^{2}w/dz^{2}={u^{2}f(\alpha,z)+g(z)}w$ is analyzed for a large real parameter $u$ and $\alpha\in[0,\alpha_{0}]$, where $\alpha_{0}>0$ is fixed. The independent variable $z$ ranges over a complex domain $Z$ (possibly unbounded) on which $f(\alpha,z)$ and $g(z)$ are analytic except at $z=0$, where the differential equation has a regular singular point. For $\alpha>0$, the function $f(\alpha,z)$ has a double pole at $z=0$ and a simple zero in $Z$, and as $\alpha\to 0$ the turning point coalesces with the pole. Bessel-function approximations are constructed for large $u$ involving asymptotic expansions that are uniformly valid for $z\in Z$ and $\alpha\in[0,\alpha_{0}]$. The expansion coefficients are generated by simple recursions, and explicit error bounds are obtained that simplify earlier results. As an application, uniform asymptotic expansions are derived for associated Legendre functions of large degree $\nu$, valid for complex $z$ in an unbounded domain and for order $\mu\in[0,\nu(1-\delta)]$, where $\delta>0$ is arbitrary.
\end{abstract}
\begin{keywords}
{Asymptotic expansions, Legendre functions, Bessel functions, WKB theory}
\end{keywords}
\begin{AMS}
34E05, 34M60, 33C05, 33C10, 34E20
\end{AMS}

\section{Introduction}
\label{sec:Introduction}

We study differential equations of the form
\begin{equation}
\label{eq01}
\frac{d^2 w}{dz^2} 
= \left\{ u^2 f(\alpha, z) + g(z) \right\} w,
\end{equation}
with a regular singularity at $z=0$ and, for $0<\alpha\leq\alpha_{0}$, a simple turning point, that is, a zero of $f(\alpha,z)$ at $z=z_{t}\in(0,\infty)$. The location of this turning point is a continuous real function of $\alpha$ and tends to $0$ as $\alpha\to 0$.

We shall suppose that equation \cref{eq01} is given in a domain $D$ of the complex $z$ plane that contains $z=0$ and $z=x_{t}$, and may be unbounded. The functions $f(\alpha,z)$ and $g(z)$ are analytic for $z\in D\setminus\{0\}$ and depend jointly continuously on $(\alpha,z)$. For each $\alpha\in[0,\alpha_{0}]$ the only singularities of $f(\alpha,z)$ and $g(z)$ occur at $z=0$. Their Laurent expansions about $z=0$ have leading terms $\tfrac14\alpha^{2}z^{-2}$ and $-\tfrac14 z^{-2}$, respectively; that is, for $0<|z|<\delta$ and $\delta>0$ sufficiently small,
\begin{equation}
\label{eq02}
f(\alpha,z)=\frac{\alpha^{2}}{4z^{2}}
+\frac{1}{z}\sum_{j=0}^{\infty}f_{j}\,z^{j},
\end{equation}
and
\begin{equation}
\label{eq03}
g(z)=-\frac{1}{4z^{2}}
+\frac{1}{z}\sum_{j=0}^{\infty}g_{j}\,z^{j}.
\end{equation}
Hence, the regular singularity at $z=0$ has exponents $\frac12 \pm\frac12\mu$, where
\begin{equation}
\label{eq04}
\mu=u\alpha.
\end{equation}

In the case $\alpha=0$ we assume that $f(0,z)$ is nonzero, has a simple pole at $z=0$, and is negative (respectively positive) for positive (respectively negative) $z\in D$. We construct asymptotic expansions with explicit error bounds in a domain $Z\subseteq D$ that contains both the pole and the turning point, with validity uniform for $0\leq\alpha\leq\alpha_{0}$. For historical background on coalescing turning points and double poles, see \cite{Boyd:1986:TPS}.

We shall apply our theory to obtain uniform asymptotic approximations for associated Legendre functions. There have been numerous numerical and asymptotic studies of associated Legendre functions for large parameters, both through diff eq and integral and difference approaches; see for example \cite{Bakaleinikov:2020:UAL,Frenzen:1990:EBU,Gil:2000:CTF,Nemes:2020:LPA,Olver:1975:LFW,Olver:1997:ASF,Shivakumar:1988:EBU,Temme:2015:AMF,Tumarkin:1959:ASO,Ursell:1984:ILP,Wang:2012:ASY}. In contrast, the present results, together with the complementary expansions in \cite{Dunster:2026:SUA}, are valid when one or both parameters are large, with coefficients that are readily computable.

Our general asymptotic theory is also applicable to Laguerre polynomials, spheroidal wave functions, Whittaker’s confluent hypergeometric functions, and Jacobi, ultraspherical, and Charlier polynomials. These special functions arise in a wide range of applications, including the scattering of particles with spin where Legendre and Jacobi (rotation) functions are employed \cite{Durand:2019:ABE}, the description of dispersive shock waves in the focusing nonlinear Schr\"{o}dinger equation via generalized Laguerre polynomials \cite{Kotlyarov:2019:DSW}, constructions of polynomial families (including Laguerre-type and higher-order Hermite polynomials) arising in Burgers-like nonlinear PDEs \cite{Dattoli:2024:HHO}, signal-processing frame expansions of band-limited signals using prolate spheroidal wave functions \cite{Hogan:2016:STS}, bound-state solution methods for the Schr\"{o}dinger equation via Laplace transforms of the confluent hypergeometric equation \cite{Nogueira:2016:LCB}, and numerical schemes for nonlinear stochastic differential equations (driven by fractional Brownian motion) based on shifted Jacobi polynomials \cite{Singh:NSB:2023}.

We begin by introducing the well-known Liouville-Green (LG) variables (see \cite[Chap. 10, Eq. (2.02)]{Olver:1997:ASF})
\begin{equation}
\label{eq05}
\xi=\int_{z_t}^{z}f^{1/2}(\alpha,v)\,dv, \quad
W(u,\alpha,\xi)=f^{1/4}(\alpha,z)\,w(u,\alpha,z),
\end{equation}
so that \cref{eq01} becomes
\begin{equation}
\label{eq06}
\frac{d^{2}W}{d \xi^{2}}
=\left\{u^{2}+\psi(\alpha,z)\right\}W,
\end{equation}
where
\begin{equation}
\label{eq07}
\psi(\alpha,z)=\frac{4f(\alpha,z)f''(\alpha,z)-5f'(\alpha,z)^2}{16f(\alpha,z)^3}
+\frac{g(z)}{f(\alpha,z)}.
\end{equation}
Classical expansions are derived from \cref{eq06} in terms of elementary functions. We use the form in which an asymptotic series appears in the exponent of an exponential \cite{Dunster:2020:LGE}, rather than the more traditional form in which an expansion multiplies an exponential function of argument $\pm u \xi$ \cite[Chap. 10]{Olver:1997:ASF}.

From \cref{eq02,eq03}, as $z\to 0$ and $\alpha>0$,
\begin{equation}
\label{eq08}
\psi(\alpha,z)=\left\{\frac{\alpha^{2}(f_{0}-g_{0})-1}
{\alpha^{4}} \right\}z+\mathcal{O}\left(z^{2}\right),
\end{equation}
while for $\alpha=0$,
\begin{equation}
\label{eq09}
\psi(0,z)=\frac{1}{8z}
+\frac{1}{16}+\mathcal{O}(z)
\quad (z \to 0).
\end{equation}
By \cref{eq07}, we see that $\psi$ is undefined at $z=z_{t}$ since $f(\alpha,z_{t})=0$. This shows that LG expansions are not valid at turning points. They can be valid at double poles, but typically not at simple poles.

To obtain expansions valid at both $z=0$ and $z=z_{t}$, following \cite[Eq. (2.1)]{Boyd:1986:TPS}, we introduce new independent and dependent variables $\zeta$ and $\tilde{W}$ via
\begin{equation}
\label{eq10}
f(\alpha, z) \left( \frac{dz}{d\zeta} \right)^2 
= \frac{\alpha^2}{4\zeta^2} - \frac{1}{4\zeta},
\end{equation}
\begin{equation}
\label{eq11}
\tilde{W}=\left(\frac{d\zeta}{dz}\right)^{1/2} w
=\left(\frac{4\zeta^{2}f(\alpha,z)}
{\zeta-\alpha^{2}}\right)^{1/4}w,
\end{equation}
and, on integrating \cref{eq10},
\begin{equation}
\label{eq12}
\int_{\alpha^2}^{\zeta} \frac{(\alpha^2-s)^{1/2}}{2s}\,ds
=\int_{z_t}^{z}f^{1/2}(\alpha,v)\,dv,
\end{equation}
with principal square roots taken. For $\alpha=0$ this reduces to
\begin{equation}
\label{eq13}
\zeta^{1/2}=\int_{0}^{z}\left\{-f(0,v)\right\}^{1/2}\,dv.
\end{equation}
Observe, from \cref{eq05,eq11},
\begin{equation}
\label{eq14}
\tilde{W}=\frac{\zeta^{1/2}}
{\left(\zeta-\alpha^{2}\right)^{1/4}}\,W.
\end{equation}

From \cref{eq02,eq10,eq13} it follows that $\zeta=\zeta(\alpha,z)$ is analytic in $z$ on $D$ for each fixed $\alpha\in[0,\alpha_{0}]$, including at $z=0$ and $z=z_{t}$. Moreover, all derivatives are continuous for $\alpha\in[0,\alpha_{0}]$, as follows from Cauchy’s integral formula for the $n$th derivatives
\begin{equation}
\label{eq15}
\zeta^{(n)}(\alpha,z)=\frac{n!}{2\pi i}
\oint_{C}\frac{\zeta(\alpha,v)}{(v-z)^{n+1}}\,dv
\quad (n=0,1,2,\dots),
\end{equation}
where $C\subset D$ is a simple positively orientated loop surrounding $z=0$ and $z=z_{t}$. The integrand is continuous for $\alpha\in[0,\alpha_{0}]$ and $v\in C$, since the contour stays away from the pole and the turning point.

Using \cref{eq07,eq11}, the differential equation \cref{eq01} becomes
\begin{equation}
\label{eq16}
\frac{d^2 \tilde{W}}{d\zeta^2} =
\left\{u^2\left( \frac{\alpha^2}{4\zeta^2} 
- \frac{1}{4\zeta} \right)
- \frac{1}{4\zeta^2}
+ \frac{\tilde{\psi}(\alpha,\zeta)}{\zeta}\right\} \tilde{W},
\end{equation}
with
\begin{multline}
\label{eq17}
\tilde{\psi}(\alpha, \zeta) 
=\frac{\zeta\left\{2\zeta'\zeta'''
-3(\zeta'')^{2}\right\}}
{4(\zeta')^{4}}
+\frac{1}{4\zeta}
+\frac{\zeta g(z)}{(\zeta')^{2}}
\\
= \frac{\zeta + 4\alpha^2}
{16\left(\alpha^2-\zeta\right)^{2}}
+ \frac{(\alpha^2-\zeta)\psi(\alpha,z)}{4\zeta}.
\end{multline}
Here, primes denote differentiation with respect to $z$. Using \crefcomma{eq03,eq12,eq15} and the first equality in \cref{eq17}, it follows, as is true for $\zeta$, that $\tilde{\psi}$ is analytic for $z\in D$ and continuous in $\alpha$ for $\alpha\in[0,\alpha_{0}]$.

In \cite{Boyd:1986:TPS} asymptotic solutions of \cref{eq16} were obtained in the form
\begin{equation}
\label{eq18}
\tilde{W}(\zeta) \sim 
\zeta^{1/2} \mathcal{C}_{\mu}(u \zeta^{1/2}) 
\sum_{s=0}^{\infty} \frac{\tilde{A}_s(\alpha, \zeta)}{u^{2s}} +
\frac{\zeta}{u} \mathcal{C}'_{\mu}(u \zeta^{1/2}) \sum_{s=0}^{\infty} \frac{\tilde{B}_s(\alpha, \zeta)}{u^{2s}},
\end{equation}
and asymptotic validity was established in certain domains containing the pole and turning point via explicit error bounds, which were obtained from the method of successive approximations. As is typical for problems with coalescing critical points, these bounds are complicated and require auxiliary weight, modulus, and phase functions that model the approximants, in this case Bessel functions. In addition, the coefficients satisfy recursion relations involving nested integrals, which are not practical for symbolic or numerical evaluation.

In this paper, we take an alternative approach: we define exact solutions in terms of two coefficient functions that replace the series in \cref{eq18}, and we analyze these functions separately. We show that each admits an asymptotic expansion whose coefficients are generated by simple recursions. The accompanying error bounds do not require auxiliary functions; they are explicit, easily computable, and involve only elementary functions. This is achieved by deriving LG expansions for the solutions of \cref{eq01} and for the Bessel approximants in \cref{eq18}, and then substituting these into the expressions for the coefficient functions.

The plan of the paper is as follows. In \cref{sec:LG} we derive LG expansions for solutions of \cref{eq16}, and then in \cref{sec:Bessel} we construct LG expansions for the Bessel approximants; these LG expansions are not valid at the turning point and are not valid at the pole when the turning point is close to it, but they are valid in a domain surrounding both points. In \cref{sec:main} we use these to construct general asymptotic expansions that are valid in an unbounded domain containing both points. In \cref{sec:Legendre} we apply the new theory to derive uniform asymptotic expansions for associated Legendre functions of large degree $\nu$, valid for complex $z$ with $|\arg z|\leq \pi/2$ and for orders $\mu$ satisfying $0\leq \mu \leq \nu(1-\delta)$, where $\delta$ is an arbitrarily small fixed positive constant.

\section{LG solutions}
\label{sec:LG}

The LG variable $\xi$ plays a central role. From \cref{eq05,eq12}, for $\alpha>0$ it is related to $\zeta$ through
\begin{equation}
\label{eq19}
\xi=\int_{\alpha^2}^{\zeta} \frac{(\alpha^2-s)^{1/2}}{2s}\,ds
=\left(\alpha^{2}-\zeta\right)^{1/2}
+\tfrac{1}{2}\alpha\ln(\zeta)
-\alpha\ln\left\{\left(\alpha^{2}
-\zeta\right)^{1/2}+\alpha\right\},
\end{equation}
with $\xi=-i\,\zeta^{1/2}$ when $\alpha=0$. The roots are chosen so that $i\xi$ is positive for $\alpha^{2} < \zeta < \infty$, and by continuity elsewhere for $\zeta$ lying in the plane cut along $(-\infty,\alpha^{2}]$. Thus, $\xi$ is negative when $\zeta$ is positive and lies above the part of the cut along $(0,\alpha^{2}]$, and it is positive when $\zeta$ is positive and lies below this part of the cut. 

\begin{figure}
 \centering
 \includegraphics[
 width=1.0\textwidth,keepaspectratio]{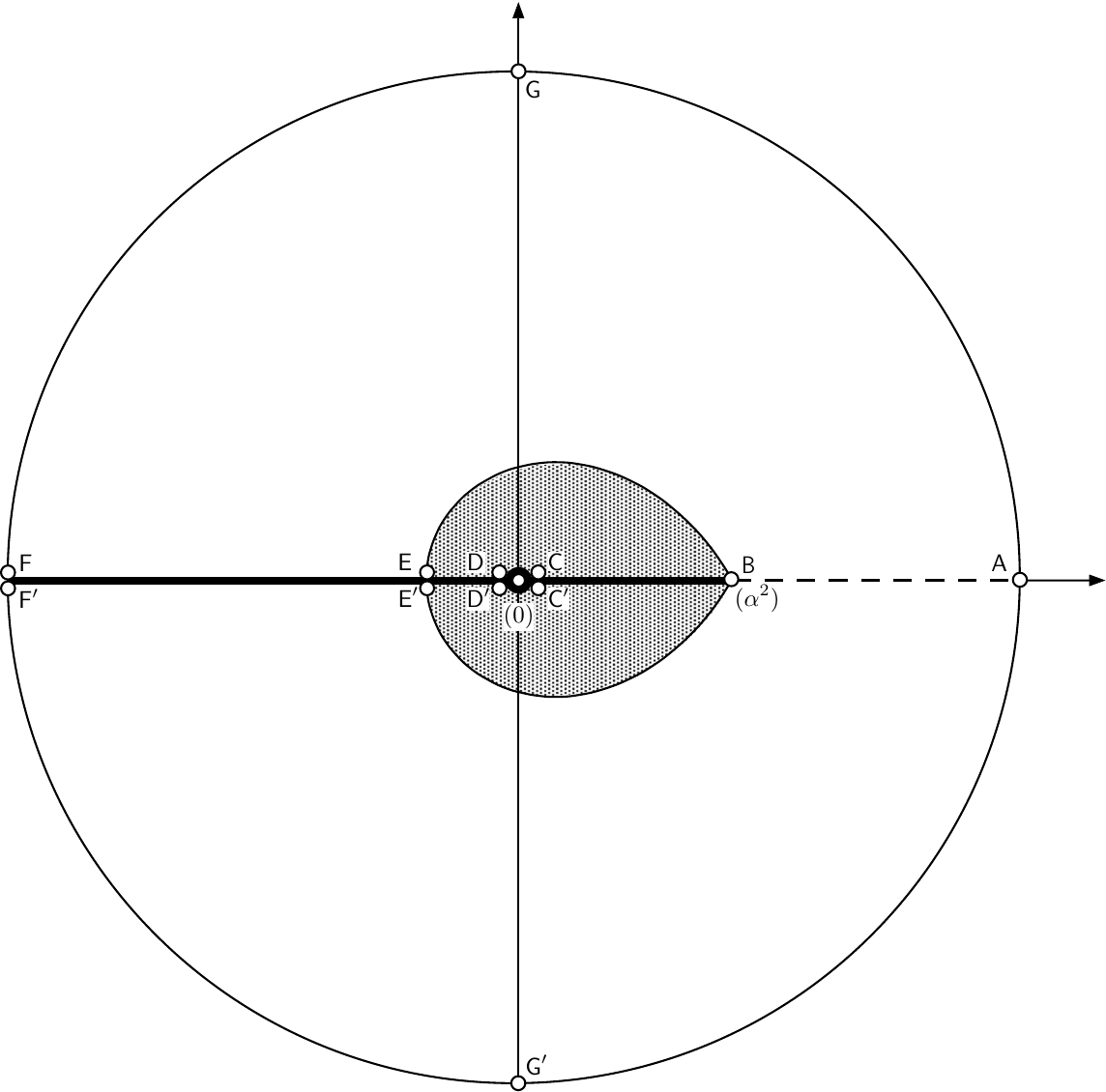}
 \caption{$\zeta$ plane, with $\Delta$ the unshaded domain.}
 \label{fig:zeta}
\end{figure}
\begin{figure}
 \centering
 \includegraphics[
 width=1.0\textwidth,keepaspectratio]{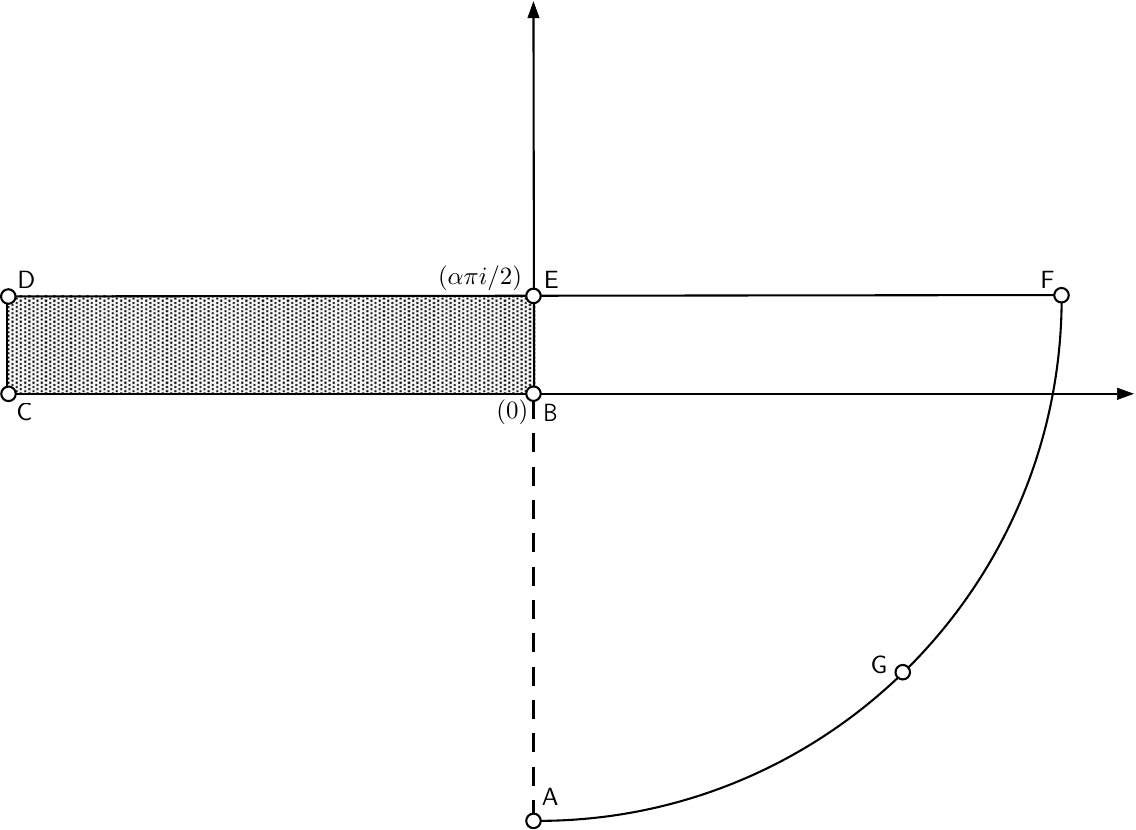}
 \caption{$\xi$ plane.}
 \label{fig:xi1}
\end{figure}
\begin{figure}
 \centering
 \includegraphics[
 width=1.0\textwidth,keepaspectratio]{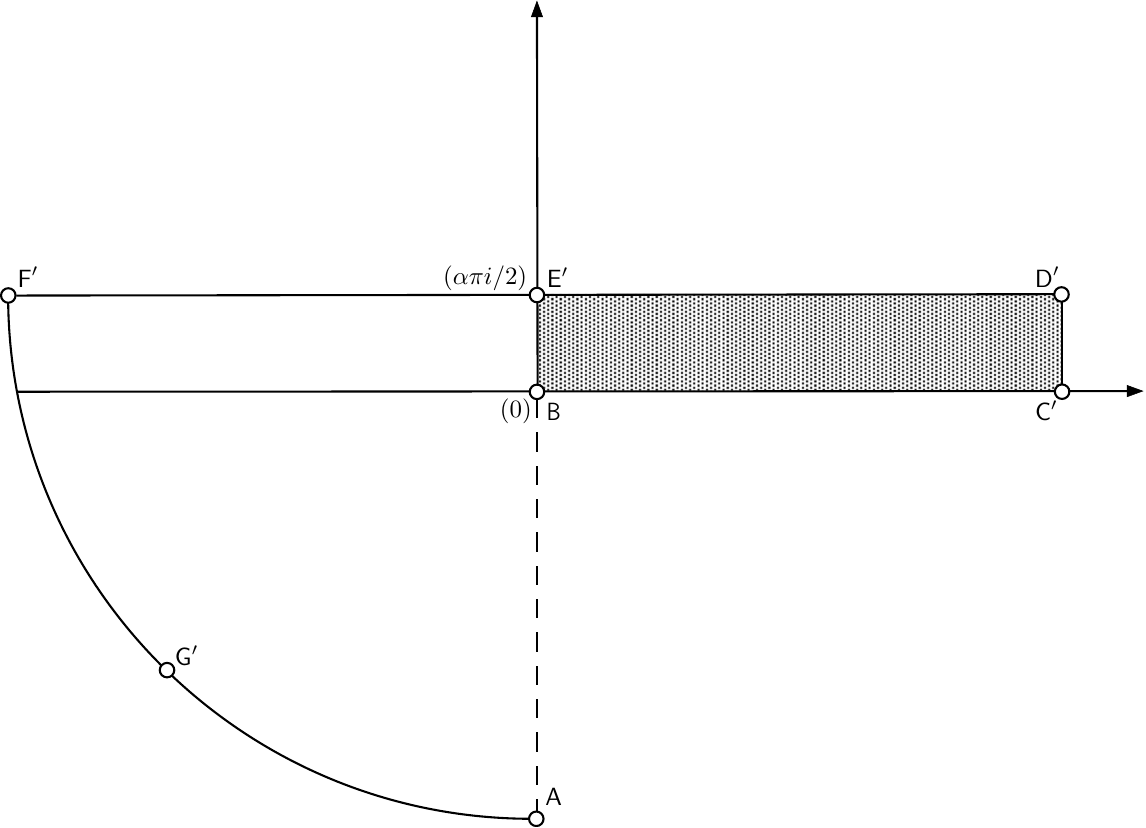}
 \caption{$\xi$ plane.}
 \label{fig:xi2}
\end{figure}

\cref{fig:zeta,fig:xi1,fig:xi2} depict the map of the $\zeta$ plane, cut along $(-\infty,\alpha^{2}]$, to the $\xi$ plane, with \cref{fig:xi1,fig:xi2} showing the images of the upper half-plane and the lower half-plane, respectively.

In establishing these, it is helpful to use the facts derived from \cref{eq19} that
\begin{equation}
\label{eq20}
\xi = -i\,\zeta^{1/2}+ \tfrac12 \alpha \pi i
+\mathcal{O}\left(\zeta^{-1/2}\right)
\quad (\zeta \to \infty),
\end{equation}
and
\begin{equation}
\label{eq21}
\xi=\pm \frac{1}{2}\alpha \ln\left(\frac{\zeta}
{4\alpha^{2}}\right) \pm \alpha+\mathcal{O}(\zeta)
\quad (\alpha>0, \, \zeta \to 0\pm i0).
\end{equation}

Denote by $\Delta$ the unshaded domain depicted in \cref{fig:zeta}. Its boundary $\mathsf{E}\mathsf{B}\mathsf{E}'$ is the level curve $\Re(\xi)=0$ and is excluded. From \cref{eq19} the points with suffixes $\mathsf{E}$ and $\mathsf{E}'$ are located at $\zeta=-\alpha^{2}\rho\pm i0$, where $\rho=0.4392288399\cdots$ is the positive root of
\begin{equation}
\label{eq22}
2(\rho + 1)^{1/2} + \ln(\rho) 
- \ln\left\{\rho+ 2(\rho + 1)^{1/2} 
+ 2 \right\}=0.
\end{equation}

Let $\zeta_{1,2}=-\infty\pm i0$, corresponding to $\xi_{1,2}=\pm\infty+\alpha\pi i/2$. Each of the corresponding points in the $z$ plane, which we denote by $z_{1,2}$, is a finite singular point or a singular point at infinity of \cref{eq01}. We construct a numerically satisfactory pair of asymptotic solutions that are recessive at these singularities and, by appropriate connection formulas, a solution that is recessive at $z=0$.
\begin{definition}
\label{defn:Xi}
Let a $\xi$ domain $\Xi$ be the set, or subset, of the union of the regions depicted in \cref{fig:xi1,fig:xi2}. It is the set of points $\xi$ satisfying
\begin{itemize}
\item[(i)] $\Im(\xi)\leq \alpha\pi/2$, except that $\xi$ does not lie on the line segment $\Re(\xi)=0$, $0\leq \Im(\xi)\leq \alpha\pi/2$;
\item[(ii)] $\tilde{\psi}(\alpha,\zeta)$ is analytic at the point $\zeta$ corresponding to $\xi$;
\item[(iii)] $\xi$ lies on a continuous directed path $\mathcal{L}$, consisting of a finite chain of $R_{2}$ arcs (as defined in \cite[Chap. 5, \S 3.3]{Olver:1997:ASF}), that runs from $\xi_{1}=\infty+\alpha\pi i/2$ to $\xi_{2}=-\infty+\alpha\pi i/2$, with points $v\in\mathcal{L}$ satisfying (i)–(ii) and $\Re(v)$ decreasing along the path.
\end{itemize}
\end{definition}
\begin{definition}
\label{defn:Z}
Let $D_{\infty}$ and $\Delta_{\infty}$ be the $z$ and $\zeta$ domains, respectively, corresponding to the $\xi$ domain $\Xi$, and let $D_{0}$ and $\Delta_{0}$ be the closed $z$ and $\zeta$ domains, respectively, corresponding to $\{\xi: 0\leq \Im(\xi)\leq \alpha\pi/2\}$ (the shaded strips in \cref{fig:xi1,fig:xi2}). Then define $Z=D_{0}\cup D_{\infty}\subseteq D$.
\end{definition}

Note that $\Delta_{0}$ is the closed bounded shaded region depicted in \cref{fig:zeta}, $\Delta$ is the complement set, and $\Delta_{\infty} \subseteq \Delta$. The LG expansions of \cref{sec:LG} will be valid for $z\in D_{\infty}$, and the LG expansions for the Bessel function approximants of \cref{sec:Bessel} will be valid for $\zeta \in \Delta$. The region $D_{\infty}$ contains the singularities $z=z_{1,2}$ but not the turning point $z_{t}$ or the pole $z=0$, and similarly $\Delta$ contains the singularities $\zeta=\zeta_{1,2}=-\infty \pm i0$ but not the turning point $\zeta=\alpha^{2}$ or the pole $\zeta=0$, both of which lie in $\Delta_{0}$.

The uniform expansions involving Bessel functions we derive will be valid for $z\in Z$, which contains the pole $z=0$, the turning point $z=z_{t}$, and the singularities $z_{1,2}$. This is achieved essentially by Cauchy’s integral formula, which allows combinations of our LG expansions to be analytically continued into the ``forbidden'' region containing the coalescing turning point and pole. 

It is important to note that allowing the pole and the turning point to be separated or to coalesce yields expansions that are valid over a wide parameter range and improves accuracy throughout the region of validity. A further advantage of our new expansions is their uniform validity at all three singularities, namely the pole $z=0$ and the finite or infinite singularities at $z=z_{1,2}$.

Instead of \cref{eq01}, we focus on the transformed equations \cref{eq06,eq16} and use them to construct LG expansions. By \cref{eq11,eq14}, these then relate to the corresponding recessive solutions of \cref{eq01}.

First, following \cite[Eqs. (1.11) and (1.15)]{Dunster:2020:LGE} we define
\begin{equation}
\label{eq23}
F_{1}(\alpha,z)=\frac12 \psi(\alpha,z), \;
F_{2}(\alpha,z)=-\frac{1}{4f^{1/2}(\alpha,z)}
\frac{d \psi(\alpha,z)}{dz},
\end{equation}
and
\begin{multline} 
\label{eq24}
F_{s+1}(\alpha,z)=-\frac{1}{2f^{1/2}(\alpha,z)}
\frac{d F_{s}(\alpha,z)}{dz}
-\frac{1}{2}\sum_{j=1}^{s-1}F_{j}(\alpha,z)
F_{s-j}(\alpha,z).
\\
\quad (s=2,3,4,\ldots).
\end{multline}
Then, from \cite[Eq. (1.14)]{Dunster:2020:LGE} we have for the odd LG coefficients
\begin{equation}
\label{eq25}
E_{2s+1}(\alpha,z)=\int_{z_{1}}^{z} 
F_{2s+1}(\alpha,v)f^{1/2}(\alpha,v)dv
+\lambda_{2s+1}(\alpha)
\quad (s=0,1,2, \ldots ),
\end{equation}
where for later ease of notation, we choose the integration constants so that
\begin{equation}
\label{eq26}
\lambda_{2s+1}(\alpha)=-\frac{1}{2}\int_{z_{1}}^{z_{2}} 
F_{2s+1}(\alpha,v)f^{1/2}(\alpha,v)dv.
\end{equation}
For the even coefficients, we set $\lambda_{2s}(\alpha)=0$.

Equation \cref{eq25} can also be used for the even LG coefficients $E_{2s}(\alpha,z)$ ($s=1,2,3,\ldots$). However, as shown in \cite{Dunster:2017:COA} using Abel's identity, these coefficients can be determined without integration by expanding the formal identity
\begin{equation}
\label{eq27}
\sum\limits_{s=1}^{\infty }\frac{E_{2s}(\alpha,z) }{u^{2s}}
= -\frac{1}{2}\ln \left\{ 1+\sum\limits_{s=1}^{\infty}
\frac{F_{2s-1}(\alpha,z) }{u^{2s}}\right\} ,
\end{equation}
in inverse powers of $u^{2}$, and then equating like powers. For example, $E_{2}(\alpha,z)=-\frac{1}{2}F_{1}(\alpha,z)$. 

From \cref{eq27,eq26,eq25} we have
\begin{equation}
\label{eq28}
E_{2s}(\alpha,z_{1})=E_{2s}(\alpha,z_{2})
=\lambda_{2s}(\alpha)=0
\quad  (s=1,2,3,\ldots),
\end{equation}
and
\begin{equation}
\label{eq29}
E_{2s+1}(\alpha,z_{1})
=-E_{2s+1}(\alpha,z_{2})
=\lambda_{2s+1}(\alpha)
\quad  (s=0,1,2,\ldots).
\end{equation}

Next, since $\tilde{\psi}(\alpha,\zeta)$ is analytic at $\zeta$, the differential equation \cref{eq16} has a regular singularity at $\zeta=0$ with exponents $\pm \tfrac12\mu+\tfrac12$, the same as \cref{eq01} at $z=0$. We regard solutions as functions of the parameter $\mu=u \alpha$, with $\alpha$ considered as a separate parameter if it appears elsewhere in the coefficients $f_{j}$ in \cref{eq02}.

From \cite[Eqs. (1.16)-(1.18)]{Dunster:2020:LGE}, denote the fundamental LG solutions of \cref{eq16}, which for convenience we consider as functions of $z$, by
\begin{multline} 
\label{eq30}
\tilde{W}_{\mu}^{(1)}(u,\alpha,z)
=\frac{\zeta^{1/2}}
{\left(\zeta-\alpha^{2}\right)^{1/4}}
\exp \left\{\sum\limits_{s=1}^{n-1}{
\frac{\lambda_{s}(\alpha)}{u^{s}}}\right\} 
\\ \times
\exp \left\{-u \xi-\frac14 \pi i
+\sum\limits_{s=1}^{n-1}{(-1)^{s}
\frac{E_{s}(\alpha,z)}{u^{s}}}
\right\} 
\left\{1+\eta_{n}^{(1)}(u,\alpha,z) \right\},
\end{multline}
and
\begin{multline} 
\label{eq31}
\tilde{W}_{\mu}^{(2)}(u,\alpha,z) 
=\frac{\zeta^{1/2}}
{\left(\zeta-\alpha^{2}\right)^{1/4}}
\exp \left\{\sum\limits_{s=1}^{n-1}{
\frac{\lambda_{s}(\alpha)}{u^{s}}}\right\} 
\\ \times
\exp \left\{u\xi+\frac14 \pi i+\sum\limits_{s=1}^{n-1}
{\frac{E_{s}(\alpha,z)}
{u^{s}}}\right\} 
\left\{1+\eta_{n}^{(2)}(u,\alpha,z) \right\}.
\end{multline}
Inclusion of the constants $\mp \pi i/4$ in the exponents is for later convenience. By \cref{eq11}, the LG solutions of \cref{eq01} are obtained from \cref{eq30,eq31} by replacing the factors $\zeta^{1/2}(\zeta-\alpha^{2})^{-1/4}$ with $f^{-1/4}(\alpha,z)$.

The relative error terms $\eta_{n}^{(1,2)}(u,\alpha,z)$ are $\mathcal{O}(u^{-n})$ as $u \to \infty$ in certain domains, and they vanish as $z \to z_{1,2}$. Before we present error bounds that reflect this, we note that, from \cref{eq04,eq20},
\begin{equation}
\label{eq32}
u\xi = \pm u\,|\zeta|^{1/2}+ \tfrac12 \mu \pi i
+\mathcal{O}\left(|\zeta|^{-1/2}\right)
\quad (\zeta \to -\infty \pm i0),
\end{equation}
and hence, from \cref{eq28,eq29,eq30,eq31},
\begin{equation}
\label{eq33}
\tilde{W}_{\mu}^{(1,2)}(u,\alpha,z)  \sim 
\zeta^{1/4} \exp \left\{-u\,|\zeta|^{1/2} 
\mp \tfrac14 (2\mu+1) \pi i \right\}
\quad (z \to z_{1,2}).
\end{equation}
This is the characteristic recessive behavior at these singularities, and also shows that both solutions are independent of $n$.

From \cref{eq33}, observe that if we replace $\mu$ by $-\mu$, the solutions $W_{-\mu}^{(1,2)}(u,\alpha,z)$ are recessive at the same singularities, and therefore must be equal to $\tilde{W}_{\mu}^{(1,2)}(u,\alpha,z)$, respectively, to within a multiplicative constant. From \cref{eq33} these constants are evident and we have
\begin{equation}
\label{eq34}
\tilde{W}_{-\mu}^{(1,2)}(u,\alpha,z)
=e^{\pm \mu \pi i}\tilde{W}_{\mu}^{(1,2)}(u,\alpha,z).
\end{equation}

Now, from \cite[Thm. 1.1]{Dunster:2020:LGE}, for $j=1,2$
\begin{equation}
\label{eq35}
\left\vert \eta_{n}^{(j)}(u,\alpha,z) \right\vert 
\leq u^{-n}
\chi_{n,j}(u,\alpha,z) \exp \left\{u^{-1}
T_{n,j}(u,\alpha,z) +u^{-n}\chi_{n,j}(u,\alpha,z) \right\},
\end{equation}
where
\begin{multline} 
\label{eq36}
\chi_{n,j}(u,\alpha,z) 
=2\int_{z_{j}}^{z}
{\left\vert F_{n}(\alpha,v) \{f(\alpha,v)\}^{1/2}dv\right\vert } 
\\ 
+\sum\limits_{s=1}^{n-1}\frac{1}{u^{s}}
 \sum\limits_{k=s}^{n-1}
\int_{z_{j}}^{z} {\left\vert 
F_{k}(\alpha,v) F_{s+n-k-1}(\alpha,v)
\{f(\alpha,v)\}^{1/2}dv \right\vert },
\end{multline}
and 
\begin{equation}
\label{eq37}
T_{n,j}(u,\alpha,z)
=4\sum\limits_{s=0}^{n-2} \frac{1}{u^{s}}
\int_{z_{j}}^{z}
\left\vert F_{s+1}(\alpha,v) 
\{f(\alpha,v)\}^{1/2}dv \right\vert.
\end{equation}
Note that we have used \cref{eq05} to change the variable in the integrals so that they are functions of $z$ rather than $\xi$.

These bounds hold where the integrals converge; thus, the integration paths must avoid the turning point $z=z_{t}$, since, by \cref{eq07,eq23,eq24}, the coefficients are not integrable at this point. At the singularities $z=z_{1,2}$ they are assumed to converge. The conditions on $f(\alpha,z)$ and $g(z)$ to ensure this are well known, for example given by \cite[Chap. 10, Thm. 4.1]{Olver:1997:ASF} when they are poles, and \cite[Chap. 10, Exercise 4.1]{Olver:1997:ASF} when they are at infinity.

At the pole $z=0$ the integrals do converge, provided that $\alpha\in(0,\alpha_{0}]$ is fixed, but we do not require LG expansions valid near this pole. Note that they are not valid at $z=0$ when $\alpha=0$, i.e. when the turning point has coalesced with the double pole to produce a simple pole (see \cref{eq09,eq23}).

Consider the Frobenius solution of \cref{eq16}, recessive at $z=\zeta=0$, given by
\begin{equation}
\label{eq38}
\tilde{W}_{\mu}^{(0)}(u,\alpha,z)
=c_{\mu}(u,\alpha)\zeta^{(\mu+1)/2}V_{\mu}(u,\alpha,z),
\end{equation}
where $V_{\mu}(u,\alpha,z)$ is analytic at $z=\zeta=0$ and normalized so that $V_{\mu}(u,\alpha,0) = 1$. We now have the linear relation
\begin{equation}
\label{eq39}
\tilde{W}_{\mu}^{(0)}(u,\alpha,z)
=\tilde{W}_{\mu}^{(1)}(u,\alpha,z)
+\gamma_{\mu}(u,\alpha)\tilde{W}_{\mu}^{(2)}(u,\alpha,z),
\end{equation}
for some constant $\gamma_{\mu}(u,\alpha)$, since all three are solutions of the same differential equation, and $\tilde{W}_{\mu}^{(1)}(u,\alpha,z)$ and $\tilde{W}_{\mu}^{(2)}(u,\alpha,z)$ are linearly independent. The constant $c_{\mu}(u,\alpha)$ is suitably chosen so that the coefficient of $\tilde{W}_{\mu}^{(1)}(u,\alpha,z)$ in \cref{eq39} is unity.

Now, from \cref{eq39},
\begin{equation}
\label{eq40}
\gamma_{\mu}(u,\alpha)
=-\frac{\mathscr{W}
\left\{\tilde{W}_{\mu}^{(1)}(u,\alpha,z),
\zeta^{(\mu+1)/2}V_{\mu}(u,\alpha,z)\right\}}
{\mathscr{W}
\left\{\tilde{W}_{\mu}^{(2)}(u,\alpha,z),
\zeta^{(\mu+1)/2}V_{\mu}(u,\alpha,z)\right\}},
\end{equation}
and the three functions in this expression are continuous in $\mu$ for fixed ordinary points $z$ and $\zeta$ of the differential equation, and for positive $u$ and $\alpha$. Hence $\gamma_{\mu}(u,\alpha)$ is likewise continuous; consequently, by \cref{eq38,eq39}, so is $c_{\mu}(u,\alpha)$.

Next, the following important connection formula will be required.
\begin{theorem}
\begin{equation}
\label{eq41}
\tilde{W}_{-\mu}(u,\alpha,z)
=e^{\mu \pi i}\tilde{W}_{\mu}^{(1)}(u,\alpha,z)
+\gamma_{\mu}(u,\alpha)e^{-\mu \pi i}\tilde{W}_{\mu}^{(2)}(u,\alpha,z),
\end{equation}
where, for arbitrary positive $n$,
\begin{equation}
\label{eq42}
\gamma_{\mu}(u,\alpha)
=1+\mathcal{O}\left(u^{-n}\right)
\quad (u \to \infty).
\end{equation}
\end{theorem}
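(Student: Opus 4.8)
The statement contains two things: the exact connection relation \cref{eq41} and the asymptotic estimate \cref{eq42}. I would dispose of \cref{eq41} first. Replace $\mu$ by $-\mu$ throughout \cref{eq38,eq39}, so that the solution $\tilde W_{-\mu}$ (obtained from $\tilde W_\mu^{(0)}$ under $\mu\mapsto-\mu$) satisfies $\tilde W_{-\mu}=\tilde W_{-\mu}^{(1)}+\gamma_{-\mu}\tilde W_{-\mu}^{(2)}$, and then rewrite $\tilde W_{-\mu}^{(1,2)}$ in terms of $\tilde W_\mu^{(1,2)}$ by \cref{eq34}; this gives $\tilde W_{-\mu}=e^{\mu\pi i}\tilde W_\mu^{(1)}+\gamma_{-\mu}e^{-\mu\pi i}\tilde W_\mu^{(2)}$, which is \cref{eq41} once one checks $\gamma_{-\mu}=\gamma_\mu$; this falls out of the computation of \cref{eq42}, since both coefficients are shown to equal the same Wronskian ratio. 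So the substance is \cref{eq42}. Here I would fix $\alpha\in(0,\alpha_0]$ first (recovering $\alpha=0$ at the end by the continuity of $\gamma_\mu(u,\alpha)$ in $\mu$, hence in $\alpha$, noted after \cref{eq40}, and making the bound uniform on $[0,\alpha_0]$ by a compactness argument), and use \cref{eq40}: taking the Wronskian of each $\tilde W_\mu^{(j)}$ $(j=1,2)$ against the recessive Frobenius solution $\zeta^{(\mu+1)/2}V_\mu$ isolates, up to the universal factor $\mu$, the coefficient $a_j$ of the \emph{complementary}, dominant Frobenius exponent $\zeta^{(1-\mu)/2}$ in the expansion of $\tilde W_\mu^{(j)}$ about $\zeta=0$. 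Thus $\gamma_\mu=-a_1/a_2$, and it suffices to pin down $a_1,a_2$ to within relative error $O(u^{-n})$.

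Next I would read off $a_1,a_2$ from the LG expansions \cref{eq30,eq31}. Although the pole $\zeta=0$ lies outside $\Delta_\infty$, for fixed $\alpha>0$ the integrals in \cref{eq36,eq37} converge there, so $\eta_n^{(1,2)}(u,\alpha,0)=O(u^{-n})$ and the expansions extend by continuity up to $\zeta=0$, approached along each of the two sides of the cut-segment $(0,\alpha^2]$. On the side where $\tilde W_\mu^{(1)}$ (respectively $\tilde W_\mu^{(2)}$) is \emph{dominant}, its leading LG behaviour is exactly the dominant exponent $\zeta^{(1-\mu)/2}$, with coefficient built from the elementary factors $\zeta^{1/2}(\zeta-\alpha^2)^{-1/4}$, the factor $e^{\mp u\xi}$ evaluated through \cref{eq21}, the constants $e^{\mp\pi i/4}$, the factor $\exp\{\sum_{s}\lambda_s/u^{s}\}$, and $\exp\{\sum_s(\mp1)^sE_s(\alpha,0)/u^{s}\}$, all multiplied by $1+\eta_n^{(1,2)}(u,\alpha,0)$. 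Forming the ratio $-a_1/a_2$, the powers of $\alpha$, the factor $\exp\{\sum\lambda_s/u^s\}$, and the $\zeta^{-\mu/2}$–coefficient $e^{-\mu}(4\alpha^2)^{\mu/2}$ all cancel, and the residual phases coming from the two determinations of $(\zeta-\alpha^2)^{1/4}$ together with $e^{\mp\pi i/4}$ reduce to $1$ — consistently with the fact that for $\tilde\psi\equiv0$ the whole computation must reproduce the classical identity $2J_\mu(z)=H_\mu^{(1)}(z)+H_\mu^{(2)}(z)$, which is the $\tilde\psi\equiv0$ instance of \cref{eq39}.

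What then survives is $\gamma_\mu=\exp\!\bigl\{-2\sum_{s\ \mathrm{odd}}E_s(\alpha,0)u^{-s}\bigr\}\cdot\bigl\{1+\eta_n^{(1)}(u,\alpha,0)\bigr\}/\bigl\{1+\eta_n^{(2)}(u,\alpha,0)\bigr\}$. The error factors are $1+O(u^{-n})$ by \cref{eq35}. For the exponential, only the odd-index coefficients at the pole enter, and these vanish: $z=0$ is the fixed point of the reflection $\zeta\mapsto\bar\zeta$ (equivalently $\xi\mapsto-\bar\xi$) interchanging $z_1$ and $z_2$, under which $F_{2s+1}f^{1/2}$ behaves so that $E_{2s+1}$ is odd about $z=0$ — the same symmetry that produces the anti-symmetry $E_{2s+1}(\alpha,z_1)=-E_{2s+1}(\alpha,z_2)$ of \cref{eq29} in combination with the normalization \cref{eq26} — so $E_{2s+1}(\alpha,0)=0$. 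Hence $\gamma_\mu=1+O(u^{-n})$; the same ratio with $\mu\mapsto-\mu$ gives $\gamma_{-\mu}=\gamma_\mu$, completing \cref{eq41}; and the case $\alpha=0$ follows by continuity.

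The main obstacle is exactly the bookkeeping at the regular singular point $\zeta=0$. The LG expansions only ever exhibit the dominant exponent, so recovering the \emph{subdominant} Frobenius coefficient of each solution forces one to work from the opposite side of the cut, with careful control of the branches of $(\zeta-\alpha^2)^{1/4}$, $\zeta^{(1\pm\mu)/2}$ and $\ln\zeta$ and of the monodromy of $F_sf^{1/2}$ about the turning point when the path defining $E_s$ is continued there — and then to verify that, modulo the $O(u^{-n})$ error terms, all the explicit and phase contributions conspire (as they must, by the Bessel connection formula) to leave exactly $1$. The uniform passage to $\alpha=0$, where the turning point coalesces with the pole and the LG expansions degrade, is a secondary difficulty handled by the continuity of $\gamma_\mu$.
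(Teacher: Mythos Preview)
Your derivation of \cref{eq41} from \cref{eq34,eq39} (pending $\gamma_{-\mu}=\gamma_\mu$) is the same as the paper's. The divergence is in the proof of \cref{eq42}. The paper does \emph{not} work at the pole $\zeta=0$; instead it exploits that $\zeta^{-(\mu+1)/2}\tilde W_\mu^{(0)}=c_\mu V_\mu$ is single-valued in $\Delta_\infty$ and compares the two limits $\zeta\to-\infty\pm i0$ directly. There the LG normalisations \cref{eq28,eq29} make the exponentials in \cref{eq30,eq31} collapse, and one reads off $\gamma_\mu=(1+\epsilon_n^{(1)})/(1+\epsilon_n^{(2)})$ with $\epsilon_n^{(j)}=\lim_{\zeta\to-\infty\mp i0}\eta_n^{(j)}$; the identical computation with $\zeta^{(\mu-1)/2}\tilde W_{-\mu}$ gives the same ratio for $\gamma_{-\mu}$.

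Your Wronskian-at-the-pole route has a genuine gap in the step where you assert $E_{2s+1}(\alpha,0)=0$. The anti-symmetry \cref{eq29} is \emph{not} the shadow of a reflection symmetry of the equation: it is a pure normalisation, forced by the choice of integration constants \cref{eq26}. No symmetry of $f$ or $g$ is assumed, so there is no map $\zeta\mapsto\bar\zeta$ acting on the $E_s$, and in general $E_{2s+1}(\alpha,0)\neq0$. (Concretely, in the Legendre case one can evaluate \cref{eq116} at $\beta^2=(1-\sigma)/(1+\sigma)$, the value corresponding to $z=0$, and obtain $E_1=(1-3\sigma^2)/(24\sigma^2\alpha)$, which does not vanish.) A second, related error is that you treat $E_s(\alpha,0)$ as a single number: to extract the dominant Frobenius coefficient of $\tilde W_\mu^{(1)}$ you must approach $\zeta=0$ from one side of the cut and for $\tilde W_\mu^{(2)}$ from the other, and the two paths from $z_1$ differ by a loop around the turning point $z_t$, where $F_s f^{1/2}$ has a branch point; the two boundary values of $E_s$ are therefore different. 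Your displayed formula for $\gamma_\mu$ thus has the wrong exponential factor, and the argument you give for that factor equalling $1$ does not hold. Working at $z_{1,2}$ as the paper does bypasses all of this monodromy bookkeeping.
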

\begin{proof}
First, $\zeta^{-(\mu+1)/2}\tilde{W}_{\mu}^{(0)}(u,\alpha,z)=c_{\mu}(u,\alpha)V_{\mu}(u,\alpha,z)$, regarded as a function of $\zeta$, is single-valued in the domain $\Delta_{\infty}$. Hence
\begin{multline} 
\label{eq43}
\lim_{\zeta \to -\infty+i0 } 
\left\{\left(\frac{|\zeta|}{\zeta}\right)^{(\mu+1)/2}
\exp\left\{-u|\zeta|^{1/2}\right\}
\tilde{W}_{\mu}^{(0)}(u,\alpha,z)\right\}
\\
=\lim_{\zeta \to -\infty-i0 } 
\left\{\left(\frac{|\zeta|}{\zeta}\right)^{(\mu+1)/2}
\exp\left\{-u|\zeta|^{1/2}\right\}
\tilde{W}_{\mu}^{(0)}(u,\alpha,z)\right\},
\end{multline}
with both limits existing as a consequence of \cref{eq30,eq31,eq32}. Thus from \cref{eq28,eq29}
\begin{equation}
\label{eq44}
\gamma_{\mu}(u,\alpha)
=\frac{1+\epsilon_{n}^{(1)}(u,\alpha)}
{1+\epsilon_{n}^{(2)}(u,\alpha)},
\end{equation}
where
\begin{equation}
\label{eq45}
\epsilon_{n}^{(1,2)}(u,\alpha)
=\lim_{\zeta \to -\infty \mp i0} 
\eta_{n}^{(1,2)}(u,\alpha,z)
=\mathcal{O}\left(u^{-n}\right)
\quad (u \to \infty).
\end{equation}
Next, from \cref{eq34,eq39}
\begin{equation}
\label{eq46}
\tilde{W}_{-\mu}(u,\alpha,z)
=e^{\mu \pi i}\tilde{W}_{\mu}^{(1)}(u,\alpha,z)
+\gamma_{-\mu}(u,\alpha)e^{-\mu \pi i}\tilde{W}_{\mu}^{(2)}(u,\alpha,z).
\end{equation}
We shall show that $\gamma_{-\mu}(u,\alpha)=\gamma_{\mu}(u,\alpha)$. Now $\zeta^{(\mu-1)/2}\tilde{W}_{-\mu}(u,\alpha,z)$, regarded as a function of $\zeta$, is single-valued in the domain $\Delta_{\infty}$, and so in a similar manner to \cref{eq43}, we have
\begin{multline} 
\label{eq47}
\lim_{\zeta \to -\infty+i0 }
\left\{\left(\frac{|\zeta|}{\zeta}\right)^{(1-\mu)/2}
\exp\left\{-u|\zeta|^{1/2}\right\}
\tilde{W}_{-\mu}(u,\alpha,z)\right\}
\\
=\lim_{\zeta \to -\infty-i0 } 
\left\{\left(\frac{|\zeta|}{\zeta}\right)^{(1-\mu)/2}
\exp\left\{-u|\zeta|^{1/2}\right\}
\tilde{W}_{-\mu}(u,\alpha,z)\right\}.
\end{multline}
From this, together with \cref{eq30,eq31,eq32,eq46}, we obtain
\begin{equation}
\label{eq48}
\gamma_{-\mu}(u,\alpha)
=\frac{1+\epsilon_{n}^{(1)}(u,\alpha)}
{1+\epsilon_{n}^{(2)}(u,\alpha)},
\end{equation}
where $\epsilon_{n}^{(1,2)}(u,\alpha)$ are the same as above, given by \cref{eq45}. Thus \cref{eq41,eq42} now follow from \cref{eq44,eq45,eq46,eq48}.
\end{proof}

\section{LG expansions for Bessel functions}
\label{sec:Bessel}
From Bessel's equation (see, for example, \cite[Eq. 10.2.1]{NIST:DLMF}), the scaled Bessel functions $\zeta^{1/2}\mathcal{C}_{\mu}(u \zeta^{1/2})$ ($\mathcal{C}=J,\,H^{(1)},\,H^{(2)}$) are readily verified to be solutions of
\begin{equation}
\label{eq49}
\frac{d^2 Y}{d\zeta^2} =
\left\{u^2\left( \frac{\alpha^2}{4\zeta^2} 
- \frac{1}{4\zeta} \right)
- \frac{1}{4\zeta^2}\right\}Y,
\end{equation}
recalling that $\mu = u \alpha$. This is the comparison equation for \cref{eq16}. Our aim is to obtain LG expansions for certain fundamental solutions of \cref{eq49} and their derivatives.

In the next section we regard $\xi$ and $\zeta$ as functions of $z$ through \cref{eq05,eq11,eq12}; here we consider only $\xi$ as a function of $\zeta$ via \cref{eq19}.

We express \cref{eq49} in the form $Y''=\{u^{2}\hat{f}+\hat{g}\}Y$, where
\begin{equation}
\label{eq50}
\hat{f}(\alpha,\zeta)
=\frac{\alpha^2}{4\zeta^2} - \frac{1}{4\zeta},
\quad
\hat{g}(\zeta)=- \frac{1}{4\zeta^2},
\end{equation}
and again apply the Liouville transformation to \cref{eq49}, which yields
\begin{multline} 
\label{eq51}
\xi=-i\int_{\alpha^{2}}^{\zeta}\{-\hat{f}(\alpha,s)\}^{1/2}ds
=-i\int_{\alpha^{2}}^{\zeta}
\frac{\left(s-\alpha^{2}\right)^{1/2}}{2s}ds
\\
=-i\left(\zeta-\alpha^{2}\right)^{1/2}
+i \alpha \arccos\left(\alpha \zeta^{-1/2}\right).
\end{multline}
Note that we choose the lower integration limits in \cref{eq51} so that $\xi=0$ at the turning point $\zeta=\alpha^{2}$. Then, in the usual manner, let
\begin{equation}
\label{eq52}
Y(u,\alpha,\zeta)
=\hat{f}^{-1/4}(\alpha,\zeta)\hat{W}(u,\alpha,\xi),
\end{equation}
and \cref{eq49} transforms to
\begin{equation}
\label{eq53}
\frac{d^{2}\hat{W}}{d \xi^{2}}
=\left\{u^{2}+\hat{\psi}(\alpha,\zeta)\right\}\hat{W},
\end{equation}
where
\begin{equation}
\label{eq54}
\hat{\psi}(\alpha,\zeta)=\frac{\zeta 
\left( \zeta+4 \alpha^{2}\right)}
{4 \left(\zeta-\alpha^{2}\right)^{3}}.
\end{equation}

The LG coefficients are derived as in \cref{sec:LG} (see \cite{Dunster:2020:LGE}), with $z$ replaced by $\zeta$ and $f,\,g,\,\psi$ replaced by $\hat{f},\,\hat{g},\,\hat{\psi}$. To facilitate the integrations, define
\begin{equation}
\label{eq55}
\hat{\beta}=\left(\alpha^{2}-\zeta\right)^{1/2},
\end{equation}
where the branch of the square root is such that $i\hat{\beta}$ is positive for $\alpha^{2} < \zeta < \infty$, and continuous in the $\zeta$ plane cut along $(-\infty,\alpha^{2}]$. Thus $\hat{\beta}$ is positive when real above the cut and negative when real below it. Then, from \cref{eq50,eq51,eq55}, we have
\begin{equation}
\label{eq56}
\frac{d\hat{\beta}}{d\xi}
=\frac{\hat{\beta}^{2}-\alpha^{2}}
{\hat{\beta}^{2}}.
\end{equation}
Hence, from \cref{eq54,eq55,eq56} and \cite[Eqs. (1.10)–(1.12)]{Dunster:2020:LGE}, we obtain the LG coefficients, denoted $e_{s}(\alpha,\hat{\beta})$ ($s=1,2,3,\dots$)
\begin{equation}
\label{eq57}
e_{1}(\alpha,\hat{\beta})
=\frac{3\hat{\beta}^{2}-5\alpha^{2}}
{24\hat{\beta}^{3}},
\end{equation}
\begin{equation}
\label{eq58}
e_{2}(\alpha,\hat{\beta})
=\frac{\left(\hat{\beta}^{2}-\alpha^{2}\right)
\left(\hat{\beta}^{2}-5\alpha^{2}\right)}{16\hat{\beta}^{6}},
\end{equation}
with subsequent ones also Laurent polynomials in $\hat{\beta}$, satisfying the simple recursion
\begin{multline}
\label{eq59}
e_{s+1}(\alpha,\hat{\beta})=
\frac{\alpha^{2}-\hat{\beta}^{2}}
{2\hat{\beta}^{2}}
\pdv{e_{s}(\alpha,\hat{\beta})}{\hat{\beta}}
\\
+\frac{1}{2}\int_{\infty}^{\hat{\beta}}
\frac{\alpha^{2}-p^{2}}{p^{2}}
\sum\limits_{j=1}^{s-1}
\pdv{e_{j}(\alpha,p)}{p}
\pdv{e_{s-j}(\alpha,p)}{p} dp
\quad (s=2,3,4,\dots).
\end{multline}
We have chosen the integration constants so that the coefficients vanish at $\hat{\beta}=\infty$ ($\zeta=\infty$).

Next, from the well-known behavior of the Hankel functions at infinity \cite[Eqs. 10.17.5 and 10.17.6]{NIST:DLMF}, we have
\begin{equation}
\label{eq60}
H_{\mu}^{(1,2)}\left(u\zeta^{1/2}\right)
\sim \sqrt{\frac{2}{\pi u}}\,
\frac{1}{\zeta^{1/4}}
\exp\left\{\pm i\left(u\zeta^{1/2}-\frac12 \mu \pi
-\frac14 \pi\right)\right\},
\end{equation}
as $\zeta \to \infty$ with $|\arg(\zeta)| \leq \pi$. This shows that they are recessive at $\zeta=\zeta_{1,2}=-\infty \pm i0$. Thus, letting $\zeta \to \zeta_{1,2}$ and using \cref{eq20,eq60}, we obtain the desired LG expansions
\begin{multline} 
\label{eq61}
H_{\mu}^{(1)}\left(u\zeta^{1/2}\right)
=\sqrt{\frac{2}{\pi u}}\,
\frac{1}{\left(\zeta-\alpha^{2}\right)^{1/4}}
\exp \left\{-u\xi-\frac14 \pi i+\sum\limits_{s=1}^{n-1}
{(-1)^{s}\frac{e_{s}(\alpha,\hat{\beta})}{u^{s}}}\right\} 
\\ \times
\left\{1+\hat{\eta}_{n}^{(1)}(u,\alpha,\zeta) \right\},
\end{multline}
and
\begin{multline} 
\label{eq62}
H_{\mu}^{(2)}\left(u\zeta^{1/2}\right)
=\sqrt{\frac{2}{\pi u}}\,
\frac{1}{\left(\zeta-\alpha^{2}\right)^{1/4}}
\exp \left\{u\xi+\frac14 \pi i+\sum\limits_{s=1}^{n-1}
{\frac{e_{s}(\alpha,\hat{\beta})}{u^{s}}}\right\} 
\\ \times
\left\{1+\hat{\eta}_{n}^{(2)}(u,\alpha,\zeta) \right\}.
\end{multline}
Error bounds are obtained similarly to \cref{eq35,eq36,eq37}. Using \cref{eq53} together with \cite[Thm.~1.1]{Dunster:2020:LGE}, we have
\begin{multline} 
\label{eq63}
\left\vert \hat{\eta}_{n}^{(1,2)}(u,\alpha,\zeta) \right\vert 
\leq |u|^{-n}\omega_{n}^{(1,2)}(u,\alpha,\hat{\beta}) 
\\ \times
\exp \left\{|u|^{-1}\varpi_{n}^{(1,2)}(u,\alpha,\hat{\beta}) 
+|u|^{-n}\omega_{n}^{(1,2)}(u,\alpha,\hat{\beta}) \right\},
\end{multline}
where
\begin{multline} 
\label{eq64}
\omega_{n}^{(1,2)}(u,\alpha,\hat{\beta}) 
=2\int_{\pm \infty}^{\hat{\beta}}
{\left\vert
\frac{\partial e_{n}(\alpha,p)}{\partial p} d p
\right\vert } 
\\ 
+\sum\limits_{s=1}^{n-1}\frac{1}{|u|^{s}}
 \sum\limits_{k=s}^{n-1}
\int_{\pm \infty}^{\hat{\beta}}
{\left\vert \frac{\alpha^{2}p-1}{p^{2}}
\frac{\partial e_{k}(\alpha,p)}{\partial p}
\frac{\partial e_{s+n-k-1}(\alpha,p)}{\partial p} dp
\right\vert},
\end{multline}
and
\begin{equation} 
\label{eq65}
\varpi_{n}^{(1,2)}(u,\alpha,\hat{\beta}) =4\sum\limits_{s=0}^{n-2}
\frac{1}{|u|^{s}}
\int_{\pm \infty}^{\hat{\beta}}
\left\vert
\frac{\partial e_{s+1}(\alpha,p)}{\partial p} dp 
\right\vert .
\end{equation}
The lower integration limits $\hat{\beta} =\pm \infty$ are chosen to correspond to $\zeta = -\infty \pm i0$ ($\xi = \pm \infty + \alpha \pi i/2$). Integration is taken along paths on which $\Re(\xi)$ is monotone and bounded away from $\xi =0$ (where $\zeta=\alpha^2$); by \cref{eq55,eq57,eq58,eq59} the integrals diverge there.

The regions of validity both include $\Delta$ (\cref{fig:zeta}). Certain parts of the shaded region may also be included, but details are not required.

We next require similar expansions for the derivatives of the Hankel-function approximants. We obtain them by noting, from \cref{eq49}, that
\begin{equation}
\label{eq66}
\tilde{Y}(u,\alpha,\zeta)
=\frac{\zeta^{3/2}}{\left(\zeta-\alpha^{2}\right)^{1/2}}
\frac{\partial \mathcal{C}_{\mu}(u \zeta^{1/2})}
{\partial \zeta},
\quad (\mathcal{C}=J,\,H^{(1)},\,H^{(2)}),
\end{equation}
are solutions of
\begin{equation}
\label{eq67}
\frac{d^2 \tilde{Y}}{d\zeta^2} =
\left\{u^2\left( \frac{\alpha^2}{4\zeta^2} 
- \frac{1}{4\zeta} \right)
+\frac{\alpha^{2}\left(4\zeta-\alpha^{2}\right)}{4\left(\zeta-\alpha^{2}\right)^{2}\zeta^2}\right\}\tilde{Y}.
\end{equation}
With $\xi$ as in \cref{eq51} and 
\begin{equation}
\label{eq68}
\tilde{Y}(u,\alpha,\zeta)
=\hat{f}^{-1/4}(\alpha,\zeta)\tilde{W}(u,\alpha,\xi),
\end{equation}
we obtain from \cref{eq67}
\begin{equation}
\label{eq69}
\frac{d^{2}\tilde{W}}{d \xi^{2}}
=\left\{u^{2}+\tilde{\psi}(\alpha,\zeta)\right\}\tilde{W},
\end{equation}
where in place of \cref{eq54} we have
\begin{equation}
\label{eq70}
\tilde{\psi}(\alpha,\zeta)=-\frac{\zeta 
\left(3\zeta+4 \alpha^{2}\right)}
{4 \left(\zeta-\alpha^{2}\right)^{3}}.
\end{equation}

We proceed analogously to \cref{eq61,eq62}, using \cite[Eqs. 10.17.11 and 10.17.12]{NIST:DLMF}
\begin{equation}
\label{eq71}
\frac{\partial}{\partial \zeta}H_{\mu}^{(1,2)}\left(u\zeta^{1/2}\right)
\sim \sqrt{\frac{u}{2\pi}}\,
\frac{1}{\zeta^{3/4}}
\exp\left\{\pm i\left(u\zeta^{1/2}
-\frac12 \mu \pi+\frac14 \pi\right)\right\},
\end{equation}
as $\zeta \to \infty$ with $|\arg(\zeta)| \leq \pi$. As a result, we obtain
\begin{multline} 
\label{eq72}
\frac{\partial}{\partial \zeta}H_{\mu}^{(1)}
\left(u\zeta^{1/2}\right)
=\sqrt{\frac{u}{2\pi}}\,
\frac{\left(\zeta-\alpha^{2}\right)^{1/4}}{\zeta}
\\ \times
\exp \left\{-u\xi+\frac14 \pi i+\sum\limits_{s=1}^{n-1}
{(-1)^{s}\frac{\tilde{e}_{s}(\alpha,\hat{\beta})}{u^{s}}}\right\} 
\left\{1+\tilde{\eta}_{n}^{(1)}(u,\alpha,\zeta) \right\},
\end{multline}
and
\begin{multline} 
\label{eq73}
\frac{\partial}{\partial \zeta}H_{\mu}^{(2)}\left(u\zeta^{1/2}\right)
=\sqrt{\frac{u}{2\pi}}\,
\frac{\left(\zeta-\alpha^{2}\right)^{1/4}}{\zeta}
\\ \times
\exp \left\{u\xi-\frac14 \pi i+\sum\limits_{s=1}^{n-1}
{\frac{\tilde{e}_{s}(\alpha,\hat{\beta})}{u^{s}}}\right\} 
\left\{1+\tilde{\eta}_{n}^{(2)}(u,\alpha,\zeta) \right\},
\end{multline}
where
\begin{equation}
\label{eq74}
\tilde{e}_{1}(\alpha,\hat{\beta})
=-\frac{9\hat{\beta}^{2}-7\alpha^{2}}{24\hat{\beta}^{3}},
\end{equation}
\begin{equation}
\label{eq75}
\tilde{e}_{2}(\alpha,\hat{\beta})
=-\frac{\left(\hat{\beta}^{2}-\alpha^{2}\right)
\left(3\hat{\beta}^{2}-7\alpha^{2}\right)}
{16\hat{\beta}^{6}},
\end{equation}
with the remaining coefficients satisfying the same recursion \cref{eq59} as $e_{s}(\alpha,\hat{\beta})$.

The regions of validity are the same as for the corresponding non-derivative functions; in particular, both contain $\Delta$. The error bounds \cref{eq63} again are applicable, with $e$ replaced by $\tilde{e}$ in \cref{eq64,eq65}.

\section{Main expansions}
\label{sec:main}
Based on \cref{eq18}, we define functions $A_{\mu}(u,\alpha,z)$ and $B_{\mu}(u,\alpha,z)$, which will be slowly varying in $u$, by expressing the LG solutions \cref{eq30,eq31} of \cref{eq16} in the form
\begin{equation}
\label{eq76}
\tilde{W}_{\mu}^{(1)}(u,\alpha,z)
=\zeta^{1/2}H_{\mu}^{(1)}\left(u\zeta^{1/2}\right)A_{\mu}(u,\alpha,z)
+\frac{\zeta^{3/2}}{u}\frac{\partial
H_{\mu}^{(1)}\left(u\zeta^{1/2}\right)}{\partial \zeta}
B_{\mu}(u,\alpha,z),
\end{equation}
and
\begin{multline}
\label{eq77}
\gamma_{\mu}(u,\alpha)\tilde{W}_{\mu}^{(2)}(u,\alpha,z)
=\zeta^{1/2}H_{\mu}^{(2)}\left(u\zeta^{1/2}\right)A_{\mu}(u,\alpha,z)
\\
+\frac{\zeta^{3/2}}{u}\frac{\partial 
H_{\mu}^{(2)}\left(u\zeta^{1/2}\right)}
{\partial \zeta}
B_{\mu}(u,\alpha,z).
\end{multline}
The constant $\gamma_{\mu}(u,\alpha)$ is the one that appears in the connection equation \cref{eq39}. As we shall show, its inclusion in \cref{eq77} ensures that both $A_{\mu}(u,\alpha,z)$ and $B_{\mu}(u,\alpha,z)$ are analytic at $z=0$. Note that the LG solutions on the LHS of these identities are recessive at the same singularities as the corresponding Hankel functions on the RHS, namely $\xi=\xi_{1,2}$ (correspondingly $\zeta = \zeta_{1,2}$).

Next, we solve the pair of equations for \cref{eq76,eq77} for $A_{\mu}(u,\alpha,z)$ and $B_{\mu}(u,\alpha,z)$, with the aid of the Hankel function Wronskian \cite[Eq. 10.5.5]{NIST:DLMF}. This yields the explicit representations
\begin{multline}
\label{eq78}
A_{\mu}(u,\alpha,z)=\frac{i\pi \zeta^{1/2}}{2}
\left\{\tilde{W}_{\mu}^{(1)}(u,\alpha,z)\frac{\partial 
H_{\mu}^{(2)}\left(u\zeta^{1/2}\right) }
{\partial \zeta}
\right.
\\
\left.
-\gamma_{\mu}(u,\alpha)
\tilde{W}_{\mu}^{(2)}(u,\alpha,z)\frac{\partial 
H_{\mu}^{(1)}\left(u\zeta^{1/2}\right)}
{\partial \zeta}
\right\},
\end{multline}
and
\begin{multline}
\label{eq79}
B_{\mu}(u,\alpha,z)=-\frac{i\pi u }{2 \zeta^{1/2}}
\left\{\tilde{W}_{\mu}^{(1)}(u,\alpha,z)
H_{\mu}^{(2)}\left(u\zeta^{1/2}\right) 
\right.
\\
\left.
-\gamma_{\mu}(u,\alpha)
\tilde{W}_{\mu}^{(2)}(u,\alpha,z)
H_{\mu}^{(1)}\left(u\zeta^{1/2}\right)
\right\}.
\end{multline}

Now, from \cref{eq30,eq31,eq61,eq62,eq72,eq73,eq78,eq79}, we obtain the expansions
\begin{multline} 
\label{eq80}
A_{\mu}(u,\alpha,z) \sim 
\sqrt{\frac{u \pi}{2}}\exp \left\{\sum\limits_{s=0}^{\infty}
\frac{\lambda_{2s+1}(\alpha)}{u^{2s+1}}\right\}
\\ \times
\exp \left\{ \sum\limits_{s=1}^{\infty}
\frac{\tilde{\mathcal{E}}_{2s}(a,z) }{u^{2s}}\right\} 
\cosh \left\{ \sum\limits_{s=0}^{\infty}
\frac{\tilde{\mathcal{E}}_{2s+1}(a,z)}
{u^{2s+1}}\right\},
\end{multline}
and
\begin{multline} 
\label{eq81}
B_{\mu}(u,\alpha,z) 
\sim \frac{\sqrt{2\pi u}}{\hat{\beta}}
\exp \left\{\sum\limits_{s=0}^{\infty}
\frac{\lambda_{2s+1}(\alpha)}{u^{2s+1}}\right\}
\\ \times
\exp \left\{ \sum\limits_{s=1}^{\infty}
\frac{\mathcal{E}_{2s}(a,z) }{u^{2s}}\right\} 
\sinh \left\{ \sum\limits_{s=0}^{\infty}
\frac{\mathcal{E}_{2s+1}(a,z)}
{u^{2s+1}}\right\},
\end{multline}
as $u \to \infty$ which are uniformly valid for $z \in D_{\infty}$, where $\hat{\beta}$ is given by \cref{eq55},
\begin{equation} 
\label{eq82}
\tilde{\mathcal{E}}_{s}(a,z)
=E_{s}(\alpha,z)+(-1)^{s}\tilde{e}_{s}(\alpha,\hat{\beta}),
\end{equation}
and
\begin{equation} 
\label{eq83}
\mathcal{E}_{s}(a,z)
=E_{s}(\alpha,z)+(-1)^{s}e_{s}(\alpha,\hat{\beta}).
\end{equation}

We then expand the exponentials and hyperbolic functions in \cref{eq80,eq81} whose series have coefficients dependent on $z$ as regular expansions in inverse powers of $u$. As a result, we obtain the asymptotic expansions as $u \to \infty$
\begin{equation} 
\label{eq84}
A_{\mu}(u,\alpha,z) \sim 
\sqrt{\frac{u \pi}{2}}\exp \left\{\sum\limits_{s=0}^{\infty}
\frac{\lambda_{2s+1}(\alpha)}{u^{2s+1}}\right\}
\left\{1 + \sum_{s=1}^{\infty}
\frac{\mathrm{A}_{s}(\alpha,z)}{u^{2s}}\right\},
\end{equation}
and
\begin{equation} 
\label{eq85}
B_{\mu}(u,\alpha,z) \sim 
\sqrt{\frac{2\pi}{u}} \exp \left\{\sum\limits_{s=0}^{\infty}
\frac{\lambda_{2s+1}(\alpha)}{u^{2s+1}}\right\}
\sum_{s=0}^{\infty}
\frac{\mathrm{B}_{s}(\alpha,z)}{u^{2s}},
\end{equation}
which are, at this stage, known to be valid for $z \in Z_{\infty}$. 

As noted in \cite[Remark 1]{Dunster:2025:SAR} for similar expansions, $\mathrm{A}_{s}(\alpha,z)=\tilde{q}_{2s}(\alpha,z)$ ($s=1,2,3,\ldots$) and $\mathrm{B}_{s}(\alpha,z)=\hat{\beta}^{-1} q_{2s+1}(\alpha,z)$ ($s=0,1,2,\ldots$), where for $s=1,2,3,\ldots$
\begin{equation}
\label{eq86}
\tilde{q}_{s}(\alpha,z)=\tilde{\mathcal{E}}_{s}(\alpha,z)
+\frac{1}{s}\sum_{j=1}^{s-1}j
\tilde{\mathcal{E}}_{j}(\alpha,z)\tilde{q}_{s-j}(\alpha,z),
\end{equation}
and
\begin{equation}
\label{eq87}
q_{s}(\alpha,z)=\mathcal{E}_{s}(\alpha,z)
+\frac{1}{s}\sum_{j=1}^{s-1}j
\mathcal{E}_{j}(\alpha,z)q_{s-j}(\alpha,z),
\end{equation}
with both sums being empty when $s=1$. The expansions \cref{eq84,eq85} are certainly valid for $z \in Z_{\infty}$, the LG region of validity. We now show that they also hold in $Z_{0}$, and hence in $Z=Z_{0} \cup Z_{\infty}$, in particular at the turning point and the pole.

To do so, we first prove that $A_{\mu}(u,\alpha,z)$ and $B_{\mu}(u,\alpha,z)$ are analytic in $Z$. To this end, using \cref{eq39,eq41,eq76,eq77} and \cite[Eqs.~10.4.4 and 10.4.6]{NIST:DLMF}, we express the Frobenius solutions $\tilde{W}_{\pm\mu}(u,\alpha,z)$ similarly as
\begin{equation}
\label{eq88}
\tilde{W}_{\pm\mu}(u,\alpha,z)
=2\zeta^{1/2}J_{\pm\mu}\left(u\zeta^{1/2}\right)A_{\mu}(u,\alpha,z)
+\frac{2\zeta^{3/2}}{u}\frac{\partial J_{\pm\mu}\left(u\zeta^{1/2}\right)}{\partial \zeta}
B_{\mu}(u,\alpha,z).
\end{equation}
Then, on solving this pair, we obtain the alternative representations
\begin{multline}
\label{eq89}
A_{\mu}(u,\alpha,z)=-\frac{\pi \zeta^{1/2}}{4\sin(\mu \pi)}
\left\{\tilde{W}_{\mu}^{(0)}(u,\alpha,z)\frac{\partial 
J_{-\mu}\left(u\zeta^{1/2}\right) }
{\partial \zeta}
\right. \\ \left.
-\tilde{W}_{-\mu}(u,\alpha,z)\frac{\partial 
J_{\mu}\left(u\zeta^{1/2}\right)}
{\partial \zeta}
\right\},
\end{multline}
and
\begin{multline}
\label{eq90}
B_{\mu}(u,\alpha,z)=\frac{\pi u }{4 \zeta^{1/2} \sin(\mu \pi)}
\left\{\tilde{W}_{\mu}^{(0)}(u,\alpha,z)
J_{-\mu}\left(u\zeta^{1/2}\right)
\right. \\ \left.
-\tilde{W}_{-\mu}(u,\alpha,z)
J_{\mu}\left(u\zeta^{1/2}\right)
\right\}.
\end{multline}
Expanding the Frobenius series for all four functions on the RHS of \cref{eq89,eq90} about $z=\zeta=0$ (see \cref{eq38} and \cite[Eq.~10.2.]{NIST:DLMF}) shows that $A_{\mu}(u,\alpha,z)$ and $B_{\mu}(u,\alpha,z)$ are analytic at this point for noninteger $\mu$; for integer $\mu$, this also holds by taking limits of the resulting Maclaurin series. Analyticity elsewhere for $z \in Z$ follows from analytic continuation.

The coefficients $\mathrm{A}_{s}(\alpha,z)$ and $\mathrm{B}_{s}(\alpha,z)$ are analytic in $Z$, except possibly for isolated singularities at $z=0$ and $z=z_{t}$. They do not have branch points at these singularities, because the LHS of \cref{eq84,eq85} are analytic, and hence single-valued, in $Z$.

We have established that the coefficient functions possess the asymptotic expansions \cref{eq84,eq85} with analytic coefficients in $Z_{\infty}$ (which surrounds but does not include the isolated singularities $z=0$ and $z=z_{t}$), and that both functions are analytic in $Z$. 

We now use the following theorem. Its proof is a routine modification of that of \cite[Thm. 3.1]{Dunster:2021:NKF} for a single isolated singularity; see also \cite[Thm. 4.2]{Dunster:2025:LCT}.

\begin{theorem} 
\label{thm:nopoles}
Let $0<\rho_{0}<\rho_{1}$, $u>0$, $z_{0}\in \mathbb{C}$, $H(u,z)$ be an analytic function of $z$ in the open disk $D_{1}=\{z:\, |z-z_{0}|<\rho_{1}\}$, and $h_{s}(z)$ ($s=0,1,2,\ldots$) be a sequence of functions that are analytic in the annulus $R=\{z:\, \rho_{0}<|z-z_{0}|<\rho_{1}\}$. If $H(u,z)$ is known to possess the asymptotic expansion
\begin{equation} 
\label{eq91}
H(u,z) \sim \sum\limits_{s=0}^{\infty}\frac{h_{s}(z)}{u^{s}}
\quad (u \rightarrow \infty),
\end{equation}
in $R$, then each $h_{s}(z)$ is analytic in $D_{1}$ (except possibly having removable singularities there), and the expansion \cref{eq91} holds for all $z \in D_{1}$.
\end{theorem}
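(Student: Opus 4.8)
The plan is to show that the asymptotic expansion, known only in the annulus $R$, forces each coefficient $h_s(z)$ to extend analytically to the full disk $D_1$, and that the expansion persists after this extension. The essential tool is the Cauchy integral formula, which expresses $h_s$ in the annulus in terms of a contour integral of $H$ and thereby exhibits $h_s$ as the restriction of a function analytic in all of $D_1$.

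First I would extract the coefficients by contour integration. Fix $s$ and a radius $\rho$ with $\rho_0<\rho<\rho_1$, and let $C_\rho$ be the positively oriented circle $|z-z_0|=\rho$. Since $H(u,\cdot)$ is analytic in $D_1\supset R$, Cauchy's formula gives, for any $z$ with $|z-z_0|<\rho$,
\begin{equation}
\label{eq:proofCauchy}
H(u,z)=\frac{1}{2\pi i}\oint_{C_\rho}\frac{H(u,v)}{v-z}\,dv.
\end{equation}
On $C_\rho$ the asymptotic expansion \cref{eq91} holds uniformly in $v$ (uniformity on compact subsets of $R$ is part of the hypothesis, or follows from it in the standard way, since a compact circle lies inside $R$). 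Substituting the expansion into \cref{eq:proofCauchy} and integrating term by term, one finds that $H(u,z)$ has, for $|z-z_0|<\rho$, an asymptotic expansion whose $s$-th coefficient is
\begin{equation}
\label{eq:proofGs}
G_s(z):=\frac{1}{2\pi i}\oint_{C_\rho}\frac{h_s(v)}{v-z}\,dv.
\end{equation}
The function $G_s$ defined by \cref{eq:proofGs} is manifestly analytic for $z$ in the whole open disk $\{|z-z_0|<\rho\}$, since the integrand is analytic in $z$ there and the contour is fixed.

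Next I would identify $G_s$ with $h_s$ on the overlap. For $z$ in the sub-annulus $\rho_0<|z-z_0|<\rho$, uniqueness of asymptotic expansions (applied pointwise in $z$, comparing the two expansions of the single function $H(u,z)$, namely \cref{eq91} and the one just derived) gives $G_s(z)=h_s(z)$ there. Hence $G_s$ provides an analytic continuation of $h_s$ from that sub-annulus into the disk $\{|z-z_0|<\rho\}$; in particular $h_s$, which was a priori only analytic in $R$, extends analytically across the inner boundary circle, the only possible singularities at $z_0$ being removable. Letting $\rho\uparrow\rho_1$ — or, more carefully, covering $D_1$ by the increasing disks $\{|z-z_0|<\rho\}$ and invoking the identity theorem to see the continuations agree on overlaps — yields a single function, still called $h_s$, analytic on all of $D_1$. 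Finally, the expansion $H(u,z)\sim\sum_s h_s(z)u^{-s}$ now holds for every $z\in D_1$: for each such $z$ pick $\rho$ with $|z-z_0|<\rho<\rho_1$, and the term-by-term argument above already established the expansion there with coefficients $G_s=h_s$.

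I expect the only delicate point to be the justification of term-by-term integration in \cref{eq:proofCauchy}, i.e. turning the asymptotic relation \cref{eq91} on the compact contour $C_\rho$ into a valid asymptotic relation for the integral. This is handled by the usual estimate: for each $N$, $|H(u,v)-\sum_{s=0}^{N-1}h_s(v)u^{-s}|\le M_N u^{-N}$ uniformly for $v\in C_\rho$ (with $M_N=\sup_{v\in C_\rho}\sup_{u\ge u_0}u^{N}|H(u,v)-\sum_{s<N}h_s(v)u^{-s}|<\infty$, finite by continuity and the pointwise asymptotics, or simply assumed as the meaning of "asymptotic expansion in $R$"), whence the remainder of the contour integral is $O(u^{-N})$ uniformly for $z$ in any compact subset of $\{|z-z_0|<\rho\}$. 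Everything else — analyticity of $G_s$, matching via uniqueness, patching via the identity theorem — is routine. This is exactly the mechanism used in \cite[Thm.~3.1]{Dunster:2021:NKF}; the only modification here is cosmetic, the disk being centered at an arbitrary $z_0$ rather than the origin, which changes nothing in the argument.
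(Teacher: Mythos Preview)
Your proposal is correct and follows precisely the approach the paper intends: the paper does not supply its own proof but states that it is a routine modification of \cite[Thm.~3.1]{Dunster:2021:NKF}, and your argument---Cauchy's integral formula on a circle $C_\rho\subset R$, term-by-term integration of the asymptotic expansion to produce analytic $G_s$ on the disk $|z-z_0|<\rho$, identification $G_s=h_s$ via uniqueness of asymptotic expansions on the overlap, and letting $\rho\uparrow\rho_1$---is exactly that mechanism. Your discussion of the uniformity needed for term-by-term integration is appropriate and is the only point requiring care.
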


We apply this by choosing $z_{0}=z_{t}/2$, $\rho_{0}=z_{0}+\delta$, $\rho_{1}=z_{0}+2\delta$, with $\delta>0$ arbitrarily small, and, if necessary, taking $\alpha$ sufficiently small so that the annulus $R$ lies in $Z_{\infty}$. This extends the expansions \cref{eq84,eq85} to $Z_{0} \subset D_{1}$. Hence, the expansions hold as $u \to \infty$, uniformly for $z \in Z = Z_{\infty} \cup Z_{0}$ and $0 \leq \alpha \leq \alpha_{0}$. Moreover, by the theorem the coefficients $\mathrm{A}_{s}(\alpha,z)$ and $\mathrm{B}_{s}(\alpha,z)$ are analytic for $z \in Z$ (treating removable singularities at the turning point as points of analyticity by continuity).

Let us summarize our results for the solutions of the differential equation \cref{eq01}, assuming conditions on $f(\alpha,z)$ and $g(z)$ as given in \cref{sec:Introduction}, $z \in Z$ as defined by \cref{defn:Z}, and $0 \leq \alpha \leq \alpha_{0}$. The parameter $\mu$ and the LG variables $\xi$ and $\zeta$ are defined by \cref{eq04,eq05,eq19}. It is assumed that \cref{eq01} has singularities at $z=0$ and $z=z_{1,2}$ corresponding to $\xi=0, \pm\infty+\alpha\pi i/2$ respectively ($\zeta=0, -\infty \pm i0$). In addition, for $0 < \alpha \leq \alpha_{0}$ the differential equation has a turning point at $z_{t} \in Z \cap [0,\infty)$, with $z_{t} \to 0$ as $\alpha \to 0$.

In the following $E_{s}(\alpha,z)$ and $\lambda_{s}(\alpha)$ are given by \cref{eq07,eq23,eq24,eq25,eq26,eq27}, $e_{s}(\alpha,\hat{\beta})$ and $\tilde{e}_{s}(\alpha,\hat{\beta})$ are given by \cref{eq55,eq57,eq58,eq59,eq74,eq75}, and $\mathcal{E}_{s}(a,z)$ and $\tilde{\mathcal{E}}_{s}(a,z)$ are given by \cref{eq82,eq83}. Our main result then reads as follows.
\begin{theorem}
\label{thm:main}
For $u>0$ and $\mu \geq 0$, the differential equation \cref{eq01} has solutions $w_{\mu}^{(j)}(u,\alpha,z)$ ($j=0,1,2$) that are recessive at $z=z_{j}$ (where $z_{0}=0$), with the properties
\begin{equation}
\label{eq92}
w_{\mu}^{(0)}(u,\alpha,z)=\mathcal{O}\left(z^{(\mu+1)/2}\right)
\quad (z \to 0),
\end{equation}
and
\begin{equation}
\label{eq93}
w_{\mu}^{(1,2)}(u,\alpha,z) \sim f^{-1/4}(\alpha,z)
e^{\mp u \xi \mp \pi i/4}
\quad (z \to z_{1,2}).
\end{equation}
These satisfy the connection relation
\begin{equation}
\label{eq94}
w_{\mu}^{(0)}(u,\alpha,z)=w_{\mu}^{(1)}(u,\alpha,z)
+\gamma_{\mu}(u,\alpha)w_{\mu}^{(2)}(u,\alpha,z),
\end{equation}
where, for all $n>0$
\begin{equation}
\label{eq95}
\gamma_{\mu}(u,\alpha)=\gamma_{-\mu}(u,\alpha)
=1+\mathcal{O}\left(u^{-n}\right)
\quad (u \to \infty).
\end{equation}
Moreover, they can be expressed in the form
\begin{multline}
\label{eq96}
\gamma_{\mu}^{(j)}(u,\alpha)w_{\mu}^{(j)}(u,\alpha,z)
=\left(\frac{\zeta-\alpha^{2}}{f(\alpha,z)}
\right)^{1/4}
\left\{ \mathcal{C}_{\mu}^{(j)}\left(u\zeta^{1/2}\right)A_{\mu}(u,\alpha,z) \right.
\\ \left.
+\frac{\zeta}{u}\frac{\partial
\mathcal{C}_{\mu}^{(j)}\left(u\zeta^{1/2}\right)}{\partial \zeta}
B_{\mu}(u,\alpha,z) \right\},
\end{multline}
where $\gamma_{\mu}^{(0,1)}(u,\alpha)=1$, $\gamma_{\mu}^{(2)}(u,\alpha)=\gamma_{\mu}(u,\alpha)$, $\mathcal{C}_{\mu}^{(0)}=2J_{\mu}$ and $\mathcal{C}_{\mu}^{(1,2)}=H_{\mu}^{(1,2)}$. Here $A_{\mu}(u,\alpha,z)$ and $B_{\mu}(u,\alpha,z)$ are analytic for $z \in Z$, and have the asymptotic expansions \cref{eq84,eq85,eq86,eq87} as $u \to \infty$ uniformly for $z \in Z$ and $\alpha \in [0,\alpha_{0}]$.

\end{theorem}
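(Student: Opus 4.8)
The statement collects the work done in Sections \ref{sec:LG}--\ref{sec:main}, so the plan is to assemble the pieces rather than to prove anything new. First I would translate everything from the $\tilde{W}$-variables back to solutions of \cref{eq01} using \cref{eq11,eq14}: the factor $\zeta^{1/2}(\zeta-\alpha^{2})^{-1/4}$ that appears in \cref{eq30,eq31} is replaced by $f^{-1/4}(\alpha,z)$, which accounts for the prefactor $\bigl((\zeta-\alpha^{2})/f(\alpha,z)\bigr)^{1/4}$ in \cref{eq96}. Define $w_{\mu}^{(1,2)}(u,\alpha,z)=f^{-1/4}(\alpha,z)(\zeta-\alpha^{2})^{1/4}\zeta^{-1/2}\tilde{W}_{\mu}^{(1,2)}(u,\alpha,z)$ and $w_{\mu}^{(0)}$ the corresponding multiple of $\tilde{W}_{\mu}^{(0)}$ from \cref{eq38}; then \cref{eq92} is immediate from the Frobenius normalization $V_{\mu}(u,\alpha,0)=1$ and the exponent $(\mu+1)/2$, while \cref{eq93} follows from the recessive behavior \cref{eq33} after undoing the change of dependent variable. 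The connection relation \cref{eq94} is just \cref{eq39} rewritten in the new variables, and \cref{eq95} is the content of the theorem proved around \cref{eq41,eq42} (the equality $\gamma_{\mu}=\gamma_{-\mu}$ was shown there via the single-valuedness arguments \cref{eq43,eq47}).

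Next I would establish \cref{eq96}. For $j=1$ this is exactly \cref{eq76} after the prefactor substitution, noting that $\zeta^{3/2}u^{-1}\partial_{\zeta}H_{\mu}^{(1)}$ becomes $\zeta u^{-1}\partial_{\zeta}H_{\mu}^{(1)}$ once the $\zeta^{1/2}$ is absorbed into the common prefactor; for $j=2$ it is \cref{eq77} with $\gamma_{\mu}^{(2)}=\gamma_{\mu}$; and for $j=0$ it is \cref{eq88} with $\mathcal{C}_{\mu}^{(0)}=2J_{\mu}$ and $\gamma_{\mu}^{(0)}=1$. The recessive characterization of $w_{\mu}^{(0)}$ at $z=0$ is consistent with \cref{eq88} because $2J_{\mu}(u\zeta^{1/2})\sim (u\zeta^{1/2}/2)^{\mu}\cdot 2/\Gamma(\mu+1)$ contributes the $\zeta^{\mu/2}$ growth matching the Frobenius exponent, so the two descriptions of $w_{\mu}^{(0)}$ agree up to the constant $c_{\mu}(u,\alpha)$, which is harmless.

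Then the analyticity and the asymptotic expansions: $A_{\mu}$ and $B_{\mu}$ were shown analytic on $Z$ via the Wronskian solve \cref{eq89,eq90} together with the Frobenius expansions at $z=0$ (the $\sin(\mu\pi)$ in the denominator being removable for integer $\mu$ by a limiting argument), and elsewhere by analytic continuation. The expansions \cref{eq84,eq85} were derived on $Z_{\infty}$ by substituting the LG expansions \cref{eq30,eq31,eq61,eq62,eq72,eq73} into the explicit formulas \cref{eq78,eq79}, and then extended to all of $Z$ — in particular across the turning point and pole — by \cref{thm:nopoles} applied with $z_{0}=z_{t}/2$ and a thin annulus in $Z_{\infty}$, taking $\alpha$ small if needed. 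The coefficient identities \cref{eq86,eq87} come from expanding the exponential and hyperbolic factors in \cref{eq80,eq81} and matching powers, as recalled from \cite{Dunster:2025:SAR}. Finally, uniformity in $\alpha\in[0,\alpha_{0}]$ follows because every bound entering \cref{eq35,eq63} depends continuously on $\alpha$ and the integration contours can be chosen uniformly away from the turning point and pole, so the implied constants are uniform.

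\textbf{Main obstacle.} The delicate point is the extension across the ``forbidden'' region: one must verify that the hypotheses of \cref{thm:nopoles} genuinely hold — that $A_{\mu},B_{\mu}$ are single-valued and analytic on a full disk $D_{1}$ containing both $z=0$ and $z=z_{t}$, with only removable singularities — and that the annulus $R$ on which the LG expansion is already known can be placed inside $Z_{\infty}$ uniformly as $\alpha\to 0$ (when $z_{t}\to 0$ the disk and annulus must be re-scaled, which is why one is allowed to shrink $\alpha$). Establishing that the coefficients $\mathrm{A}_{s},\mathrm{B}_{s}$ have at worst removable singularities at $z_{t}$ relies on the already-proven analyticity of the left-hand sides of \cref{eq84,eq85}, so the argument is not circular, but this interplay between ``the function is analytic'' and ``therefore each coefficient is'' is the step requiring the most care.
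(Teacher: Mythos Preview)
Your proposal is correct and follows essentially the same route as the paper: the theorem is a summary statement, and the paper assembles it exactly as you describe—undoing \cref{eq11} to pass from $\tilde{W}$ to $w$, reading off \cref{eq92,eq93,eq94,eq95} from \cref{eq33,eq38,eq39,eq41,eq42}, identifying \cref{eq96} with \cref{eq76,eq77,eq88}, proving analyticity of $A_{\mu},B_{\mu}$ via \cref{eq89,eq90}, and extending the expansions \cref{eq84,eq85} from $D_{\infty}$ to all of $Z$ by \cref{thm:nopoles}. Your identification of the extension step as the delicate point is also on target; the paper handles it exactly as you outline, invoking analyticity of $A_{\mu},B_{\mu}$ on $Z$ first so that \cref{thm:nopoles} applies non-circularly.
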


\section{Associated Legendre functions}
\label{sec:Legendre}
We illustrate the theory by finding asymptotic expansions for solutions of the associated Legendre equation
\begin{equation}
\label{eq97}
\left(t^2-1\right)\frac{d^2 y}{d t^2} +2t\frac{dy}{dt} 
-\left\{\nu(\nu + 1)+\frac{\mu^2}{t^2-1}\right\}y=0.
\end{equation}
Standard solutions are given by $y(u,\alpha,t)=L_{\nu}^{\pm\mu}(t)$ where $L=\mathrm{P},\mathrm{Q},P,Q$ (see \cite[\S14.3]{NIST:DLMF}). For simplicity, we restrict our attention to the half-plane $|\arg(t)|\leq \pi/2$, with extensions to other parts of the complex plane achievable through appropriate connection formulas; see, for example, \cite[\S14.24]{NIST:DLMF}.

To put \cref{eq97} into a form to which the theory applies, let
\begin{equation}
\label{eq98}
z=1-t,
\quad
u = \nu + \tfrac{1}{2}, 
\quad 
\alpha = \frac{\mu}{\nu + \tfrac{1}{2}},
\end{equation}
and
\begin{equation}
\label{eq99}
w(u,\alpha,z)=\left(t^2-1\right)^{1/2}y(u,\alpha,t).
\end{equation}
Then \cref{eq97} takes the desired form \cref{eq01} with
\begin{equation}
\label{eq100}
f(\alpha,z)=\frac{z^{2}-2z+\alpha^{2}}{z^{2}(z-2)^{2}},
\quad
g(z)=-\frac{z^{2}-2z+4}{4z^{2}(z-2)^{2}},
\end{equation}
which agree with \cref{eq02,eq03}.

The zeros of $f$ are the turning points, given by $z=1\pm \sigma$ ($t=\pm \sigma$), where
\begin{equation}
\label{eq101}
\sigma=(1-\alpha^{2})^{1/2},
\end{equation}
and we focus on $z_{t}=1-\sigma$ which lies in the half-plane $\Re(z) \leq 1$ under consideration, and avoid the other one, denoted by $\tilde{z}_{t}=1+\sigma$. Thus
\begin{equation}
\label{eq102}
f(\alpha,z)=\frac{\left(z-z_{t}\right)
\left(z-\tilde{z}_{t}\right)}{z^{2}(z-2)^{2}},
\end{equation}
and in terms of the original variable $t$
\begin{equation}
\label{eq103}
f=\frac{t^{2}-\sigma^{2}}
{\left(t^2-1\right)^{2}}.
\end{equation}
Since $z_t \to 0$ as $\alpha \to 0$ ($\sigma \to 1$), the theory of \cref{sec:main} is directly applicable. Here we assume $0 \leq \alpha \leq 1-\delta$ for arbitrary small positive $\delta$, and hence $z_{t}$ is bounded away from $\tilde{z}_{t}$ (since they coalesce at $z=1$ when $\alpha=1$). In terms of our earlier notation $\alpha_{0}=1-\delta$.

From \cref{eq07,eq100} the Schwarzian in the present case is
\begin{equation} 
\label{eq104}
\psi(\alpha, z) =
-\frac{z(z-2)\left(z^{2}-2z
-4\alpha^{2}(z-1)^{2}+\alpha^{4}\right)}
{4\left(z^{2}-2z+\alpha^{2}\right)^{3}}.
\end{equation}

For the $z-\xi$ mapping we suppose that $z$ lies in the half-plane $\Re(z) \leq 1$ having a cut along $(-\infty,z_{t}]$, equivalently $t$ lying in the half-plane $\Re(t) \geq 0$ having a cut along $[\sigma, \infty)$. With these choices, \cref{eq05} gives
\begin{equation}
\label{eq105}
\xi=\int_{z_{t}}^{z}f^{1/2}(\alpha,v)dv
=\int_{t}^{\sigma}f^{1/2}(\alpha,1-v)dv,
\end{equation}
with the roots chosen so that $i\xi$ is positive for $t \in [0,\sigma]$ (i.e. $z \in [z_{t},1]$), and by continuity elsewhere in the cut half-planes. Hence, from \cref{eq100,eq101}
\begin{equation}
\label{eq106}
\xi= \alpha\,\mathrm{arctanh}
\left\{\frac{\left(t^{2}-\sigma^{2}\right)^{1/2}}
{\alpha t}\right\}
- \mathrm{arccosh}\left(\frac{t}{\sigma}\right).
\end{equation}
In the real interval $0 \leq t \leq \sigma$ it is seen that $\xi$ is purely imaginary, with explicit representation
\begin{equation}
\label{eq107}
\xi=i\alpha\arccos\left\{\frac{\alpha t}
{\sigma\left(1-t^{2}\right)^{1/2}}\right\}
-i\arccos\left(\frac{t}{\sigma}\right),
\end{equation}
and with our choice of branches we obtain from \cref{eq106} the limiting behavior
\begin{equation} 
\label{eq108}
\xi=\mp\ln\left(\frac{2t}{\sigma}\right)
\mp\alpha\ln\left(\frac{\sigma}{1+\alpha}\right)
+\frac{1}{2}\alpha\pi i
\quad (t \to \infty \pm i0).
\end{equation}

Similarly to \cref{eq55}, we introduce a variable that aids in the integrals used to evaluate the LG coefficients, and specifically we define
\begin{equation}
\label{eq109}
\beta=\left(\frac{t-\sigma}{t+\sigma}\right)^{1/2}
=\left(\frac{z-z_{t}}{z-\tilde{z}_{t}}\right)^{1/2},
\end{equation}
with the roots chosen so that $i\beta$ is positive in $z_{t}<z \leq 1$ and by continuity elsewhere in the $z$ half-plane $\Re(z) \leq 1$ having a cut along $(-\infty,z_{t}]$. Now, from \cref{sec:Introduction}, in general $\xi_{1,2}=\pm \infty+\alpha\pi i/2$, therefore, from \cref{eq98,eq108}, in the present case we have $z_{1,2}=-\infty \pm i0$. Thus, from \cref{eq109}
\begin{equation}
\label{eq110}
\lim_{z \to z_{1,2}}\beta(\alpha,z)=\pm 1.
\end{equation}
Note that from \cref{eq98,eq109}
\begin{equation}
\label{eq111}
1-z=t=\frac{\sigma\left(\beta^{2}+1\right)}{1-\beta^{2}}.
\end{equation}

Now from \cref{eq05,eq100,eq101,eq106,eq109,eq111}
\begin{equation}
\label{eq112}
G(\alpha,\beta):=\frac{d\beta}{d\xi}
=\frac{1}{f^{1/2}(\alpha,z)}\frac{d\beta}{dz}
=\frac{\left(\beta^{2}-1\right)\left\{
\alpha^{2}\left(\beta^{2}+1\right)^{2}-4\beta^{2}\right\}}
{8\,\sigma^{2}\beta^{2}}.
\end{equation}
Then, from \cref{eq23,eq25,eq100,eq101,eq105,eq109,eq110,eq111,eq112}, the first LG coefficient, regarded as a function of $\beta$, is given by
\begin{multline}
\label{eq113}
E_{1}(\alpha,\beta)
=\int_{1}^{\beta}
\frac{\left(p^{2}-1\right)
\left\{4p^{2}-\alpha^{2}
\left(5p^{4}+6p^{2}+5\right)\right\}}
{64 \sigma^{2} p^{4}}dp
+\lambda_{1}(\alpha)
\\
=-\frac{\left(\beta-1\right)^{2}
\left\{5\alpha^{2}\left(\beta^{2}+\beta+1\right)^{2}
-3\left(4-\alpha^{2}\right)\beta^{2}\right\}}
{192 \sigma^{2}\beta^{3}}
+\lambda_{1}(\alpha)
\\
=-\frac{1}{192\,\sigma^{2}}
\left\{\frac{5\alpha^{2}}{\beta^{3}}
-\frac{3\left(4-\alpha^{2}\right)}{\beta}
-3(4-\alpha^{2})\beta
+5\alpha^{2}\beta^{3}
\right\}
-\frac{3-2\alpha^{2}}{24\sigma^{2}}
+\lambda_{1}(\alpha).
\end{multline}

To determine $\lambda_{1}(\alpha)$, and $\lambda_{2s+1}(\alpha)$ ($s=0,1,2,\ldots$) in general, from \cref{eq29,eq110} the following identities must hold
\begin{multline}
\label{eq114}
\lambda_{2s+1}(\alpha)
=\lim_{z \to z_{1}}E_{2s+1}(\alpha,\beta)
=E_{2s+1}(\alpha,1)
\\
=-\lim_{z \to z_{2}}E_{2s+1}(\alpha,\beta)
=-E_{2s+1}(\alpha,-1)
\quad (s=0,1,2,\ldots),
\end{multline}
and specifically $E_{2s+1}(\alpha,1)=-E_{2s+1}(\alpha,-1)$. Thus the choice of integration constants given generally by \cref{eq26} for \cref{eq25} are equivalent to choosing $\lambda_{2s+1}(\alpha)$ so that the coefficients $E_{2s+1}(\alpha,\beta)$ are odd Laurent polynomials, i.e. having no additive constant terms. Thus, from the third equality in \cref{eq113} we see that 
\begin{equation}
\label{eq115}
\lambda_{1}(\alpha)=\frac{3-2\alpha^{2}}{24\sigma^{2}},
\end{equation}
and consequently from \cref{eq113,eq115}
\begin{equation}
\label{eq116}
E_{1}(\alpha,\beta)
=\frac{\left(\beta^{2}+1\right)
\left\{2 \left(\alpha^{2}+6\right)\beta^{2}
-5\alpha^{2}\left(\beta^{4}+1\right)\right\}}
{192 \sigma^{2} \beta^{3}}.
\end{equation}

The even coefficients are more straightforward, since from \cref{eq28} they are simply chosen to vanish at $z=z_{1,2}$, or equivalently for $\beta=\pm 1$ (see \cref{eq110}). From \cref{eq23,eq25,eq100,eq104,eq111,eq112} we then find for the first one that
\begin{equation}
\label{eq117}
E_{2}(\alpha,\beta)
=\frac{\left(\beta^{2}-1\right)^{2}}
{1024 \, \sigma^{4} \beta^{6}}
\left\{4\beta^{2}-\alpha^{2}
\left(\beta^{2}+1\right)^{2}\right\}
\left\{4\beta^{2}-\alpha^{2}
\left(5\beta^{4}+6\beta^{2}+5\right)\right\}.
\end{equation}
From \cref{eq24,eq25,eq100,eq111,eq112} subsequent terms (odd and even) are given by
\begin{multline} 
\label{eq118}
E_{s+1}(\alpha,\beta)=
-\frac{1}{2} G(\alpha,\beta)
\pdv{E_{s}(\alpha,\beta)}{\beta}
\\
-\frac{1}{2}  \int_{(s)}
G(\alpha,\beta)
\sum\limits_{j=1}^{s-1}
\pdv{E_{j}(\alpha,\beta)}{\beta}
\pdv{E_{s-j}(\alpha,\beta)}{\beta} d\beta
\quad (s=2,3,4,\ldots),
\end{multline}
where
\begin{equation}
\label{eq119}
\int_{(s)}H(\beta)d\beta
=
    \begin{cases}
        \int_{1}^{\beta}H(p)dp & \text{if } s \,\, \text{is odd}\\
        \frac{1}{2}\left[ \int_{\beta_{0}}^{\beta}H(p)dp
        +\int_{-\beta_{0}}^{\beta}H(p)dp \right]
        & \text{if } s \,\, \text{is even}
    \end{cases}.
\end{equation}
Here $\beta_{0}$ is an arbitrary nonzero constant, and the  integration paths avoid the pole at $p=0$.

From \cref{eq112,eq116,eq117}, and induction, the first term on the RHS of \cref{eq118} is, respectively, an even/odd Laurent polynomial when $s$ is odd / even, and vanishes at $\beta=\pm 1$. Likewise, the integrand of the second term on the RHS is, respectively, an even/odd Laurent polynomial when $s$ is odd / even. Thus, from \cref{eq119} we have that $E_{2s+1}(\alpha,\beta)$ ($s=0,1,2,\ldots$) are Laurent polynomials in $\beta$ which are odd, which, as noted earlier, is required. Moreover, $E_{2s}(\alpha,\beta)$...
\begin{equation}
\label{eq120}
\lim_{z \to z_{1,2}}E_{2s}(\alpha,\beta)
=E_{2s}(\alpha,\pm 1)=0,
\end{equation}
in accord with \cref{eq28}. 

Note that if $H(p)$ is an odd Laurent polynomial having no $p^{-1}$ term, then $\int_{-1}^{1}H(p)dp \allowbreak =0$ along any path avoiding $p=0$. That this is true here, and hence, in particular, no $\ln(\beta)$ terms can appear upon integration for the even coefficients in \cref{eq118} ($s$ odd), can be inferred from \cref{eq27} that they satisfy the asymptotic expansion
\begin{equation}
\label{eq121}
\sum\limits_{s=1}^{\infty }
\frac{E_{2s}(\alpha,\beta) }{u^{2s}}
\sim -\frac{1}{2}\ln \left\{ 1+G(\alpha,\beta)
\sum\limits_{s=1}^{\infty}
\frac{1}{u^{2s}}
\pdv{E_{2s-1}(\alpha,\beta)}{\beta}\right\}
\quad (u \to \infty).
\end{equation}

As an aside, we observe that all the coefficients, odd and even, are palindromic, i.e., they are invariant under the map $\beta \to 1/\beta$, which can readily be verified by induction. This is due to the symmetry of the differential equation \cref{eq97} about the origin and the definition \cref{eq109} of $\beta$.

Next, following \cite[Eq. (5.11)]{Dunster:2025:LCT}, define
\begin{equation}
\label{eq122}
\mathsf{Q}^{\mu}_{\nu,1}(t)
=
    \begin{cases}
        \boldsymbol{Q}^{\mu}_{\nu}(t) & \Im(t) \geq 0\\
        \boldsymbol{Q}^{\mu}_{\nu,1}(t) & \Im(t) < 0
    \end{cases}\, , \quad
    \mathsf{Q}^{\mu}_{\nu,-1}(t)
=\overline{\mathsf{Q}^{\mu}_{\nu,1}(\bar{t})}.
\end{equation}
From this we note that \cite[Eq.~14.9.14]{NIST:DLMF} $\mathsf{Q}^{-\mu}_{\nu,\pm 1}(t)=\mathsf{Q}^{\mu}_{\nu,\pm 1}(t)$. The significance of these solutions is that they are analytic in the $z$ plane having cuts along $(-\infty,-1]$ and $[1,\infty)$, with $\mathsf{Q}^{\mu}_{\nu,\pm 1}(t)$ recessive at $t = \infty \pm i0$ ($z = -\infty \mp i0$).

From \cite[Eqs.~14.3.1 and 14.8.1]{NIST:DLMF} the solution of \cref{eq97} that is recessive at $t=1$ is the Ferrers function given by
\begin{equation} 
\label{eq123}
\mathsf{P}_{\nu}^{-\mu}(t)
=\left(\frac{1-t}{1+t}\right)^{\mu/2}
\mathbf{F}\left(\nu+1,-\nu;1+\mu;\tfrac{1}{2}
-\tfrac{1}{2}t\right),
\end{equation}
where $\mathbf{F}$ is the scaled hypergeometric function defined by \cite[Eq. 15.2.2]{NIST:DLMF}. In this, we can replace $\mu \neq 1,2,3,\ldots$ by $-\mu$ to obtain the solution of \cref{eq97} that has the exponent $-\mu/2$ at the singularity $t=1$. From \cref{eq122} \cite[Eq. 14.23.2]{NIST:DLMF} the following connection formula will be utilized
\begin{equation}
\label{eq124}
\mathsf{P}^{\pm \mu}_{\nu}(t)=\frac{i\Gamma(\nu \pm \mu+1)}{\pi}
\left\{e^{ \mp \mu\pi i/2}\mathsf{Q}^{\mu}_{\nu,1}(t)
-e^{\pm \mu\pi i/2}\mathsf{Q}^{\mu}_{\nu,-1}(t)\right\}.
\end{equation}

We now match solutions of \cref{eq97} that are recessive at $t=\infty\pm i 0$ with the LG expansions given by \cref{eq30,eq31}. To this end, from \crefcomma{eq108,eq122} and \cite[Eq.~14.8.15]{NIST:DLMF} for nonnegative $\nu$
\begin{equation}
\label{eq125}
\mathsf{Q}^{\mu}_{\nu,\pm 1}(t)
=\frac{\sqrt{\pi}}
{\Gamma\left(\nu+\frac{3}{2}\right)
(2t)^{\nu+1}}
\left\{1+\mathcal{O}
\left(\frac{1}{t}\right)\right\}
\quad \left(t \rightarrow \infty \pm i0 \right).
\end{equation}
Thus, on referring to \cref{eq28,eq29,eq30,eq31,eq98,eq101,eq108,eq125}, we arrive at the desired representations
\begin{multline} 
\label{eq126}
\mathsf{Q}^{\mu}_{\nu,\mp 1}(t)
=\sqrt{\frac{\pi}{2}}
\left(\frac{\sigma}{1+\alpha}\right)^{\mu}
\frac{e^{\pm(\mu+1)\pi i/2}}
{\sigma^{u}\Gamma(u+1)}
\\ \times
\left\{\frac{\zeta-\alpha^{2}}
{\left(z-z_{t}\right)
\left(\tilde{z}_{t}-z\right)\zeta^{2}}\right\}^{1/4}
\tilde{W}_{\mu}^{(1,2)}(u,\alpha,z).
\end{multline}
The quarter power factors in front of $\tilde{W}_{\mu}^{(2,1)}(u,\alpha,z)$ come from \cref{eq11,eq98,eq99,eq103}.

In these expansions, the relative error terms $\eta_{n}^{(1,2)}(u,\alpha,z)$ given in \cref{eq30,eq31} are $\mathcal{O}(u^{-n})$ as $u \to \infty$ for $z \in Z_{\infty}$, which in our case is an unbounded subset of the half-plane $|\arg(1-z)|\leq \pi/2$ that does not contain $z=z_{t}$ or $z=0$. We are not concerned with specifying the region precisely, since our main expansions below involving Bessel functions will follow directly from \cref{thm:main} and be valid in the entire half-plane.

Next, we have the identification
\begin{multline} 
\label{eq127}
\mathsf{P}^{-\mu}_{\nu}(t)
=\sqrt{\frac{1}{2\pi}}
\left(\frac{\sigma}{1+\alpha}\right)^{\mu}
\frac{\Gamma\left(u - \mu+\tfrac12\right)}
{\sigma^{u}\Gamma(u+1)}
\\ \times
\left\{\frac{\zeta-\alpha^{2}}
{\left(z-z_{t}\right)
\left(\tilde{z}_{t}-z\right)\zeta^{2}}\right\}^{1/4}
\tilde{W}_{\mu}^{(0)}(u,\alpha,z),
\end{multline}
since both sides are solutions of \cref{eq97} that are recessive at $t=1$ ($z=0$). The constant of proportionality, along with identifying that $\gamma_{\mu}(u,\alpha)=1$, follow from \cref{eq39,eq124,eq126}. Note that this value of $\gamma_{\mu}(u,\alpha)$ is in agreement with \cref{eq42}. Also observe that \cref{eq124}, with upper signs taken, together with \cref{eq126}, agrees with \cref{eq41} with $\gamma_{\mu}(u,\alpha)=1$.

Now, using \cref{thm:main} and \cref{eq76,eq77,eq126}, we obtain the desired expansion
\begin{multline} 
\label{eq128}
\mathsf{Q}^{\mu}_{\nu,\mp 1}(t)
=\sqrt{\frac{\pi}{2}}
\left(\frac{\sigma}{1+\alpha}\right)^{\mu}
\frac{e^{\pm(\mu+1)\pi i/2}}
{\sigma^{u}\Gamma(u+1)}
\left(\frac{\zeta-\alpha^{2}}
{\sigma^{2}-t^{2}}\right)^{1/4}
\\ \times
\left\{H_{\mu}^{(1,2)}\left(u\zeta^{1/2}\right)A_{\mu}(u,\alpha,z)
+\frac{\zeta}{u}\frac{\partial
H_{\mu}^{(1,2)}\left(u\zeta^{1/2}\right)}{\partial \zeta}
B_{\mu}(u,\alpha,z)\right\},
\end{multline}
and similarly, using \cref{eq88,eq127}, we obtain
\begin{multline} 
\label{eq129}
\mathsf{P}^{-\mu}_{\nu}(t)
=\sqrt{\frac{2}{\pi}}
\left(\frac{\sigma}{1+\alpha}\right)^{\mu}
\frac{\Gamma\left(u - \mu+\tfrac12\right)}
{\sigma^{u}\Gamma(u+1)}
\left(\frac{\zeta-\alpha^{2}}
{\sigma^{2}-t^{2}}\right)^{1/4}
\\ \times
\left\{J_{\mu}\left(u\zeta^{1/2}\right)A_{\mu}(u,\alpha,z)
+\frac{\zeta}{u}\frac{\partial
J_{\mu}\left(u\zeta^{1/2}\right)}{\partial \zeta}
B_{\mu}(u,\alpha,z)\right\}.
\end{multline}
The expansions \cref{eq84,eq85,eq86,eq87} for $A_{\mu}(u,\alpha,z)$ and $B_{\mu}(u,\alpha,z)$ are uniformly valid for $|\arg(1-z)|\leq \pi/2$, i.e. $|\arg(t)|\leq \pi/2$.

From \cite[Eq. (5.14)]{Dunster:2025:LCT} the numerically satisfactory companion Ferrers function to $\mathsf{P}_{\nu}^{-\mu}(t)$ for $0 \leq t < 1$ is the real-valued function given by
\begin{equation} 
\label{eq130}
\mathsf{Q}_{\nu}^{-\mu}(t)
=\tfrac12 \Gamma(\nu-\mu+1)\left\{e^{\mu \pi i/2}
\mathsf{Q}^{\mu}_{\nu,1}(t)
+e^{-\mu \pi i/2}\mathsf{Q}^{\mu}_{\nu,-1}(t)\right\}.
\end{equation}
Consequently, on referring to \cite[Eq.~10.4.4]{NIST:DLMF}, we find the following representation for $0 \leq t <1$
\begin{multline} 
\label{eq131}
\mathsf{Q}^{-\mu}_{\nu}(t)
=-\sqrt{\frac{\pi}{2}}
\left(\frac{\sigma}{1+\alpha}\right)^{\mu}
\frac{\Gamma\left(u - \mu+\tfrac12\right)}
{\sigma^{u}\Gamma(u+1)}
\left(\frac{\zeta-\alpha^{2}}
{\sigma^{2}-t^{2}}\right)^{1/4}
\\ \times
\left\{Y_{\mu}\left(u\zeta^{1/2}\right)A_{\mu}(u,\alpha,z)
+\frac{\zeta}{u}\frac{\partial
Y_{\mu}\left(u\zeta^{1/2}\right)}{\partial \zeta}
B_{\mu}(u,\alpha,z)\right\}.
\end{multline}

Finally, consider solutions to \cref{eq97} that are real for $t=x \in (1,\infty)$ (thus, from \cref{eq12,eq111} $z=1-x$ and $\zeta$, both lie in $(-\infty,0)$). For the one that is recessive at $x=1$ we have from \cite[Eq. 14.23.4]{NIST:DLMF}
\begin{equation} 
\label{eq132}
P^{-\mu}_{\nu}(x)
=e^{\mu\pi i/2}\mathsf{P}^{-\mu}_{\nu}(x+i0).
\end{equation}
Now $\zeta^{1/2}=-i|\zeta|^{1/2}$ for $t=x+i0$ and $1<x<\infty$. Thus, from \cref{eq129} and \cite[Eq.~10.27.6]{NIST:DLMF}
\begin{multline} 
\label{eq133}
P^{-\mu}_{\nu}(x)
=\sqrt{\frac{2}{\pi}}
\left(\frac{\sigma}{1+\alpha}\right)^{\mu}
\frac{\Gamma\left(u - \mu+\tfrac12\right)}
{\sigma^{u}\Gamma(u+1)}
\left(\frac{|\zeta|+\alpha^{2}}
{x^{2}-\sigma^{2}}\right)^{1/4}
\\ \times
\left\{I_{\mu}\left(u|\zeta|^{1/2}\right)A_{\mu}(u,\alpha,1-x)
+\frac{|\zeta|}{u}\frac{\partial
I_{\mu}\left(u|\zeta|^{1/2}\right)}{\partial |\zeta|}
B_{\mu}(u,\alpha,1-x)\right\},
\end{multline}
with the expansions being uniformly valid for $1<x<\infty$. 

Using \crefcomma{eq122,eq128} and \cite[Eq.~10.27.8]{NIST:DLMF}, for the solution that is real-valued for $x \in (1,\infty)$ and recessive at $x=\infty$, we similarly obtain
\begin{multline} 
\label{eq134}
\mathbf{Q}^{\mu}_{\nu}(x)
=\sqrt{\frac{2}{\pi}}
\left(\frac{\sigma}{1+\alpha}\right)^{\mu}
\frac{1}{\sigma^{u}\Gamma(u+1)}
\left(\frac{|\zeta|+\alpha^{2}}
{x^{2}-\sigma^{2}}\right)^{1/4}
\\ \times
\left\{K_{\mu}\left(u|\zeta|^{1/2}\right)A_{\mu}(u,\alpha,1-x)
+\frac{|\zeta|}{u}\frac{\partial
K_{\mu}\left(u|\zeta|^{1/2}\right)}{\partial |\zeta|}
B_{\mu}(u,\alpha,1-x)\right\}.
\end{multline}

\subsection{Numerical Example}
\label{numerics}

We now illustrate the accuracy of our asymptotic expansion for $\mathsf{P}^{-\mu}_{\nu}(t)$. For $t\in[0,1)$, define the approximation $\mathsf{P}_{4}(\nu,\mu,t)$ by taking the first four terms of the series in \cref{eq84,eq85,eq129}, that is.
\begin{multline} 
\label{eq135}
\mathsf{P}_{4}(\nu,\mu,t)
=\left(\frac{\sigma}{1+\alpha}\right)^{\mu}
\frac{\sqrt{u}\,\Gamma\left(u - \mu+\tfrac12\right)}
{\sigma^{u}\Gamma(u+1)}
\left(\frac{\zeta-\alpha^{2}}
{\sigma^{2}-t^{2}}\right)^{1/4}
\exp \left\{\sum\limits_{s=0}^{3}
\frac{\lambda_{2s+1}(\alpha)}{u^{2s+1}}\right\}
\\ \times
\left[J_{\mu}\left(u\zeta^{1/2}\right)
\left\{1 + \sum_{s=1}^{3}
\frac{\mathrm{A}_{s}(\alpha,\beta)}{u^{2s}}\right\}
+\frac{2\zeta}{u^{2}}\frac{\partial
J_{\mu}\left(u\zeta^{1/2}\right)}{\partial \zeta}
\sum_{s=0}^{3}
\frac{\mathrm{B}_{s}(\alpha,\beta)}{u^{2s}}\right].
\end{multline}
It is convenient to express the coefficients in terms of $\beta$ (defined in \cref{eq109}) rather than $z=1-t$, because the LG coefficients in \cref{eq116,eq117,eq118,eq119} are Laurent polynomials in $\beta$.

To compute $\zeta$ ($\ge \alpha^{2}$) for $0 \leq t\le \sigma$, we numerically solve the following equation obtained from \cref{eq51,eq107}:
\begin{equation} 
\label{eq136}
\left(\zeta-\alpha^{2}\right)^{1/2}
-\alpha \arccos\left(\frac{\alpha}{\zeta^{1/2}}\right)
=\arccos\left(\frac{t}{\sigma}\right)
-\alpha\arccos\left\{\frac{\alpha t}
{\sigma\left(1-t^{2}\right)^{1/2}}\right\}.
\end{equation}
Likewise, for $\sigma \le t < 1$, we use \cref{eq19,eq106} to solve for $\zeta \in (0,\alpha^{2}]$ from the implicit equation
\begin{multline} 
\label{eq137}
\alpha\ln\left\{\left(\alpha^{2}
-\zeta\right)^{1/2}+\alpha\right\}
-\left(\alpha^{2}-\zeta\right)^{1/2}
-\frac{1}{2}\alpha\ln(\zeta)
\\
=\alpha\,\mathrm{arctanh}
\left\{\frac{\left(t^{2}-\sigma^{2}\right)^{1/2}}
{\alpha t}\right\}
- \mathrm{arccosh}\left(\frac{t}{\sigma}\right).
\end{multline}

Because $\mathsf{P}^{-\mu}_{\nu}(t)$ has zeros on $[0,1)$, an envelope function is introduced to assess the relative error in this interval. Following \cite[\S 6]{Dunster:2025:LCT}, therefore define
\begin{equation}
\label{eq138}
M(\nu,\mu,t)=
    \begin{cases}
        \left[\left\{\mathsf{P}^{-\mu}_{\nu}(t)\right\}^{2}
        +\left\{2\mathsf{Q}^{-\mu}_{\nu}(t)/\pi
        \right\}^{2}\right]^{1/2} 
        & (0<t \leq q^{-\mu}_{\nu,1})\\
        \mathsf{P}^{-\mu}_{\nu}(t) & (q^{-\mu}_{\nu,1} < t < 1)
    \end{cases},
\end{equation}
where $q^{-\mu}_{\nu,1}$ is the largest positive zero of $\mathsf{Q}^{-\mu}_{\nu}(t)$. On $(0,q^{-\mu}_{\nu,1})$ this positive function closely tracks the amplitude of $\mathsf{P}^{-\mu}_{\nu}(t)$, while on $(q^{-\mu}_{\nu,1},1)$ it coincides with $\mathsf{P}^{-\mu}_{\nu}(t)$.

\begin{figure}
 \centering
 \includegraphics[
 width=0.9\textwidth,keepaspectratio]{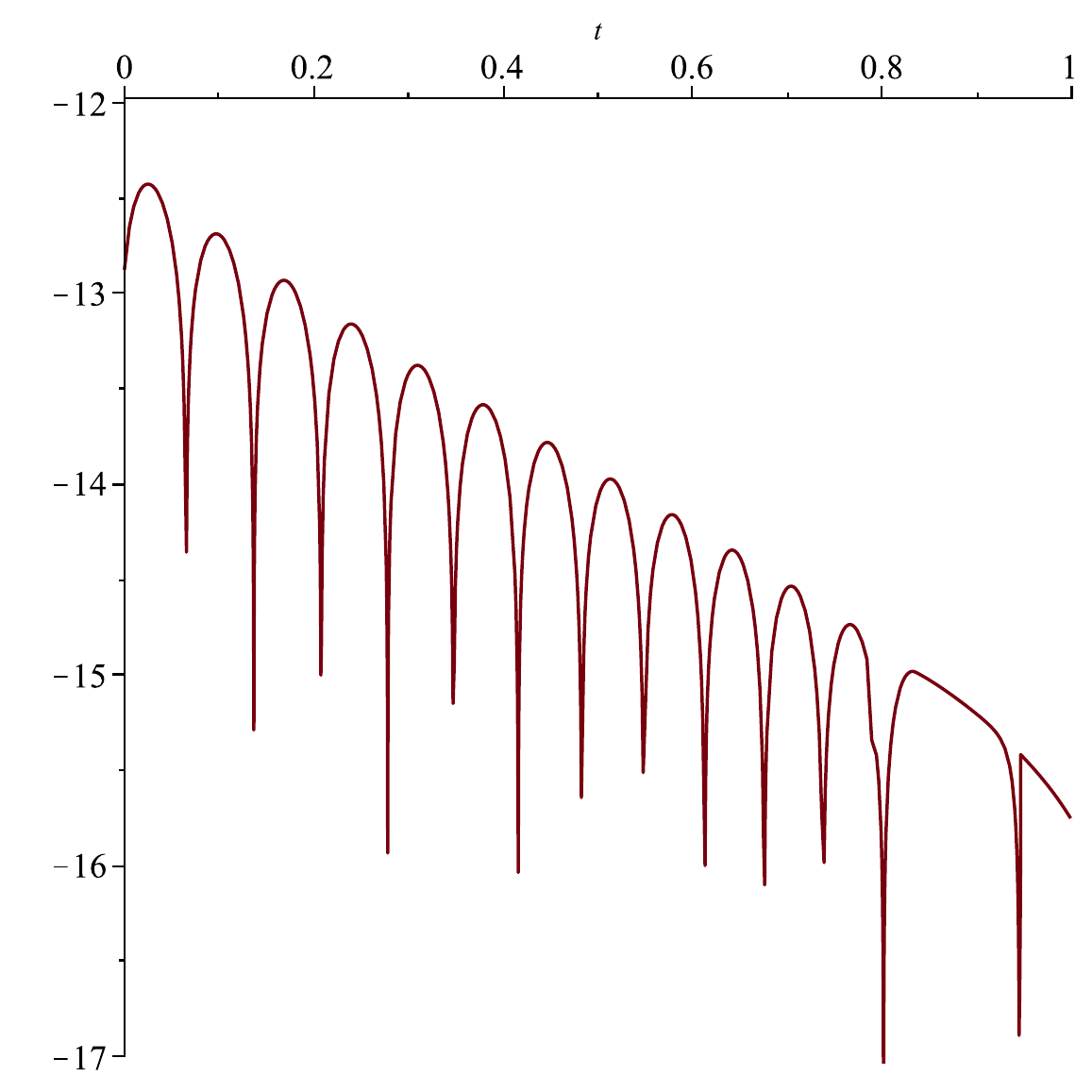}
 \caption{Graph of $\Omega_{4}(\nu,\mu,t)$ for $\nu=50$ and $\mu=u \alpha= (\nu+\frac12) \alpha$, with $\alpha=0.5$.}
 \label{fig:numerics1}
\end{figure}

The following relative error function was then computed
\begin{equation} 
\label{eq139}
\Omega_{4}(\nu,\mu,t)
=\log_{10}\left\{\frac{\left|\mathsf{P}^{-\mu}_{\nu}(t)
-\mathsf{P}_{4}(\nu,\mu,t)\right|}
{M(\nu,\mu,t)}\right\}
\quad (0 \leq t <1).
\end{equation}
A plot of $\Omega_{4}(\nu,\mu,t)$ with $\nu=50$ ($u=50.5$) and $\mu=u\alpha$ is given in \cref{fig:numerics1} for $\alpha=0.5$, noting that the value of the turning point is at $t=\sigma = \sqrt{3}/2=0.86602\cdots$, which is close to the singularity $t=1$. The plot demonstrates high accuracy uniformly throughout the interval $[0,1)$. We remark that the cusps are at points where the relative error is exactly zero. 

Similarly, in \cref{fig:numerics2} we plot $\Omega_{4}(\nu,\mu,t)$ for $\nu=50$ and $\mu=u\alpha$ with $\alpha=0.9$. Here, the turning point occurs at $\sigma=\sqrt{19}/10 = 0.43589\cdots$, which lies farther from the singularity. The accuracy throughout the interval remains good, though it is lower than in the case where the turning point is closer to the singularity. Compared to the previous parameter values, the turning points at $t=\pm\sigma$ are closer to each other. Since the present approximation is not uniform in a neighborhood of $t=-\sigma$, this increased proximity explains the reduction in overall accuracy. In such cases, a uniform expansion based on the new theory of coalescing turning points can yield a sharper approximation throughout the interval without increasing the number of terms; see \cite[\S 5]{Dunster:2025:LCT}.

\begin{figure}
 \centering
 \includegraphics[
 width=0.9\textwidth,keepaspectratio]{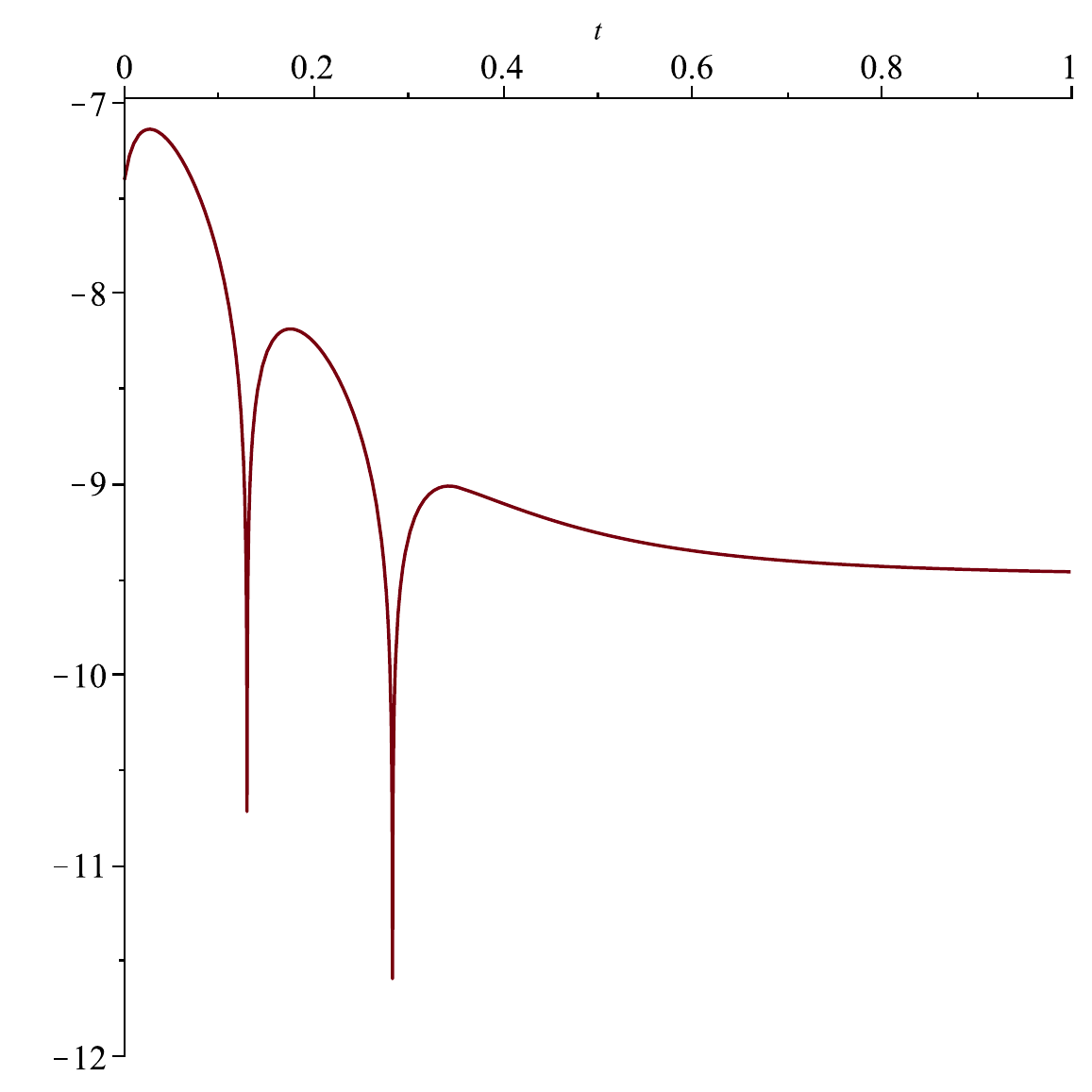}
 \caption{Graph of $\Omega_{4}(\nu,\mu,t)$ for $\nu=50$ and $\mu=u \alpha= (\nu+\frac12) \alpha$, with $\alpha=0.9$.}
 \label{fig:numerics2}
\end{figure}

Finally, we mention that the coefficients in \cref{eq135}, while bounded at $t=\sigma$, have a removable singularity there and, therefore, are not numerically stable to evaluate when $t$ is close to this value. Accordingly, in our computations we expanded the coefficients as a Maclaurin series in $\beta$ for $|t-\sigma|<0.08$, and computed them directly for all other values of $t$. In doing so, we used from \cref{eq55,eq109,eq137},
\begin{multline} 
\label{eq140}
\hat{\beta}=\kappa\beta+\frac{\left(3+\sigma^{2}\right)\kappa
-8\sigma^{2}}{5\alpha^{2}}\beta^{3}
\\
+\frac{3\kappa^{5}+(\sigma^{2}+3)
\left\{(43\sigma^{2}+29)\kappa-168\sigma^{2}\right\}}
{175\alpha^{4}}\beta^{5}
+\ldots
\quad (\beta \to 0, \,\alpha>0),
\end{multline}
where $\kappa=2\sigma^{2/3}$; the limiting value as $\alpha \to 0$ also applies, namely
\begin{equation} 
\label{eq141}
\hat{\beta}=2\beta+\tfrac{2}{3}\beta^{3}
+\tfrac{2}{3}\beta^{5}+\ldots
\quad (\beta \to 0, \,\alpha=0).
\end{equation}

From this, corresponding series for the coefficients can be constructed. For example, as $\beta \to 0$ ($\alpha>0$),
\begin{equation} 
\label{eq142}
\mathrm{B}_{0}(\alpha,\beta)
=\frac{2\sigma^{2}\kappa+(\sigma^{2}+3)
(3-4\sigma^{2})}{140\alpha^{2}\sigma^{2}\kappa}
+\mathcal{O}(\beta),
\end{equation}
and
\begin{equation} 
\label{eq143}
\mathrm{A}_{1}(\alpha,\beta)
=\frac{9(\sigma^{2}+3)(3-4\sigma^{2})
\kappa^{2}+224\sigma^{6}-276\sigma^{4}
+588\sigma^{2}-392}{50400\alpha^{2}\sigma^{4}}
+\mathcal{O}(\beta),
\end{equation}
with again the limiting values being applicable when $\alpha = 0$. Because the higher-order terms are unwieldy in exact form, it is more computationally efficient to evaluate them numerically once (for the parameter values used in the plot) and to store these values for subsequent use.

\section*{Acknowledgment}
Financial support from Ministerio de Ciencia e Innovación pro\-ject PID2024-159583NB-I00 (MICIU/ AEI / 10.13039/501100011033 / FEDER, UE) is acknowledged.

\section*{Conflict of interest}
The author declares no conflicts of interest.

\makeatletter
\interlinepenalty=10000

\bibliographystyle{siamplain}
\bibliography{biblio}

@article{Dunster:2025:LCT,
  title={Asymptotic expansions for solutions of differential equations having coalescing turning points, with an application to {L}egendre functions},
  author={Dunster, T. M.},
  journal={Stud. Appl. Math.},
  year={2025},
  note={},
  eprint={},
  doi={10.1111/sapm.70138},
  archivePrefix={},
  primaryClass={math.CA},
  url={}
}

@article{Nemes:2020:LPA,
	author = {Nemes, Gerg\"{o} and Olde Daalhuis, Adri B.},
	journal = {SIAM J. Math. Anal.},
	number = {1},
	pages = {437-470},
    Doi = {10.1137/19M1262498},
	title = {Large-Parameter Asymptotic Expansions for the {L}egendre and Allied Functions},
	volume = {52},
	year = {2020}}

@article{Frenzen:1990:EBU,
	author = {Frenzen, C. L.},
	journal = {SIAM J. Math. Anal.},
	number = {2},
	pages = {523-535},
	title = {Error Bounds for a Uniform Asymptotic Expansion of the {L}egendre Function ${Q}_n^{-m}(\cosh z)$},
	volume = {21},
    Doi ={10.1137/0521028},
	year = {1990}}

@article{Shivakumar:1988:EBU,
    author = {Shivakumar, Pappur and Wong, R.},
    year = {1988},
    pages = {473-488},
    title = {Error bounds for a uniform asymptotic expansion of the {L}egendre function ${P}_n^{-m}(\cosh z)$},
    volume = {46},
    journal = {Q. Appl. Math.},
    doi = {10.1090/qam/963583}}

@article{Gil:2000:CTF,
	author = {Amparo Gil and Javier Segura and Nico M. Temme},
	journal = {J. Comput. Phys.},
	number = {1},
	pages = {204-217},
	title = {Computing Toroidal Functions for Wide Ranges of the Parameters},
	volume = {161},
    doi = {10.1006/jcph.2000.6498},
	year = {2000}}

@article{Tumarkin:1959:ASO,
	Author = {Tumarkin, S. A.},
	Doi = {10.1016/0021-8928(59)90011-5},
	Journal = {J. Appl. Math. Mech.},
	Pages = {1549--1565},
	Title = {Asymptotic solution of a linear non-homogeneous second order differential equation with a transition point and its application to the computations of toroidal shells and propeller blades},
	Volume = {23},
	Year = {1959},
	Bdsk-Url-1 = {https://doi.org/10.1016/0021-8928(59)90011-5}}

@article{Dunster:2020:LGE,
	Author = {Dunster, T. M.},
	Doi = {10.1017/prm.2018.117},
	Journal = {Proc. Roy. Soc. Edinburgh Sec. A},
	Number = {3},
	Pages = {1289-1311},
	Publisher = {Royal Society of Edinburgh Scotland Foundation},
	Title = {Liouville-{G}reen expansions of exponential form, with an application to modified {B}essel functions},
	Volume = {150},
	Year = {2020},
	Bdsk-Url-1 = {https://doi.org/10.1017/prm.2018.117}}

@misc{NIST:DLMF,
	howpublished = {Release 1.1.6 of 2022-06-30},
	key = {{\relax DLMF}},
	note = {F.~W.~J. Olver, A.~B. {Olde Daalhuis}, D.~W. Lozier, B.~I. Schneider, R.~F. Boisvert, C.~W. Clark, B.~R. Miller, B.~V. Saunders, H.~S. Cohl, and M.~A. McClain, eds.},
	title = {{\it NIST Digital Library of Mathematical Functions}},
	url = {http://dlmf.nist.gov/}}

@article{Boyd:1986:TPS,
	author = {Boyd, W. G. C. and Dunster, T. M.},
	journal = {SIAM J. Math. Anal.},
	number = {2},
	pages = {422-450},
	title = {Uniform Asymptotic Solutions of a Class of Second-Order Linear Differential Equations Having a Turning Point and a Regular Singularity, with an Application to {L}egendre Functions},
	volume = {17},
    Doi = {10.1137/0517033},
	year = {1986}}

@article{Dunster:2025:SAR,
  author       = {T. M. Dunster},
  title        = {Simplified {A}iry function Asymptotic expansions for Reverse Generalised {B}essel Polynomials},
  year         = {2025},
  journal      = {J. Classical Anal.},
  Doi={},
  URL = {https://arxiv.org/abs/2506.20934},
  note         = {In Press}}

@article{Dunster:2026:SUA,
  author  = {Dunster, T. M.},
  title   = {Simplified uniform asymptotic expansions for associated {L}egendre and conical functions},
  journal = {J. Approx. Theory},
  volume  = {313},
  year    = {2026},
  month   = jan,
  pages   = {106228},
  doi     = {10.1016/j.jat.2025.106228}
}

@article{Wang:2012:ASY,
	Author = {Wang, X.-S. and Wong, R.},
	Journal = {Anal. Appl.},
	Title = {Asymptotics of orthogonal polynomials via recurrence relations},
	Doi = {10.1142/S0219530512500108},
	Volume = {10},
	Number = {2},
	Pages = {215-235},
	url = {},
	Year = {2012}}

@article{Ursell:1984:ILP,
	author = {Ursell, F.},
	journal = {Math. Proc. Camb. Philos. Soc.},
    Doi = {10.1017/S0305004100061648},
	number = {2},
	pages = {367-380},
	title = {Integrals with a large parameter: {L}egendre functions of large degree and fixed order},
	volume = {95},
	year = {1984}}

@book{Temme:2015:AMF,
	Author = {Temme, Nico M.},
	Isbn = {978-981-4612-15-9},
	Mrclass = {41-02 (33Cxx 33E20 65D30)},
	Mrnumber = {3328507},
	Mrreviewer = {Jos\~A\copyright{} Luis L\~A${}^3$pez},
	Pages = {xxii+605},
	Publisher = {World Scientific Publishing Co. Pte. Ltd., Hackensack, NJ},
	Series = {Series in Analysis},
	Title = {Asymptotic methods for integrals},
	Volume = {6},
	Year = {2015}}

@book{Olver:1997:ASF,
	Address = {Wellesley, MA},
	Author = {Olver, F. W. J.},
	Isbn = {1-56881-069-5},
	Mrclass = {41-02 (33Cxx 41A60 65D20)},
	Mrnumber = {MR1429619 (97i:41001)},
	Note = {Reprint of the 1974 original [Academic Press, New York]},
	Pages = {xviii+572},
	Publisher = {A K Peters Ltd.},
	Series = {AKP Classics},
	Title = {Asymptotics and special functions},
	Year = {1997}}

@article{Dunster:2017:COA,
	Author = {Dunster, T. M. and Gil, A. and Segura, J.},
	Date-Modified = {2019-06-20 15:58:58 -0700},
	Doi = {10.1007/s00365-017-9372-8},
	Journal = {Constr. Approx.},
	Number = {3},
	Pages = {645-675},
	Title = {Computation of asymptotic expansions of turning point problems via {C}auchy's integral formula: Bessel functions},
	Volume = {46},
	Year = {2017},
	Bdsk-Url-1 = {http://dx.doi.org/10.1007/s00365-017-9372-8}}

@article{Olver:1975:LFW,
	Author = {Olver, F. W. J.},
	Doi = {10.1098/rsta.1975.0024},
    Fjournal = {Philosophical Transactions of the Royal Society of London Series A},
	Journal = {Philos. Trans. Roy. Soc. London Ser. A},
	Number = {},
	Pages = {175-185},
	Title = {Legendre functions with both parameters large},
	Volume = {278},
	Year = {1975}}

@article{Dunster:2021:NKF,
	Author = {Dunster, T. M.},
	Doi = {10.1137/21M1401590},
	Fjournal = {SIAM Journal on Mathematical Analysis},
	Journal = {SIAM J. Math. Anal.},
	Number = {5},
	Pages = {5915-5947},
	Title = {Nield-{K}uznetsov functions and {L}aplace transforms of parabolic cylinder functions},
	Url ={},
	Volume = {53},
	Year = {2021},
	Bdsk-Url-1 = {https://doi.org/10.1137/21M1401590}}

@article {Bakaleinikov:2020:UAL,
    AUTHOR = {Bakaleinikov, Leonid and Silbergleit, Alexander},
     TITLE = {Uniform asymptotic expansion of {L}egendre functions},
   JOURNAL = {J. Math. Phys.},
  FJOURNAL = {Journal of Mathematical Physics},
    VOLUME = {61},
      YEAR = {2020},
    NUMBER = {8},
     PAGES = {083503, 16},
      ISSN = {0022-2488,1089-7658},
   MRCLASS = {41A60 (33C45 34E05)},
  MRNUMBER = {4134751},
MRREVIEWER = {Jos\'e\ Luis\ L\'opez},
       DOI = {10.1063/5.0016206},
       URL = {},
}

@article {Durand:2019:ABE,
    AUTHOR = {Durand, Loyal},
     TITLE = {Asymptotic {B}essel-function expansions for {L}egendre and
              {J}acobi functions},
   JOURNAL = {J. Math. Phys.},
  FJOURNAL = {Journal of Mathematical Physics},
    VOLUME = {60},
      YEAR = {2019},
    NUMBER = {1},
     PAGES = {013501, 14},
      ISSN = {0022-2488,1089-7658},
   MRCLASS = {33C10 (33C45)},
  MRNUMBER = {3894879},
MRREVIEWER = {Pedro\ J.\ Pagola},
       DOI = {10.1063/1.5030869},
       URL = {},
}

@article {Kotlyarov:2019:DSW,
    AUTHOR = {Kotlyarov, Vladimir and Minakov, Alexander},
     TITLE = {Dispersive shock wave, generalized {L}aguerre polynomials, and
              asymptotic solitons of the focusing nonlinear {S}chr\"odinger
              equation},
   JOURNAL = {J. Math. Phys.},
  FJOURNAL = {Journal of Mathematical Physics},
    VOLUME = {60},
      YEAR = {2019},
    NUMBER = {12},
     PAGES = {123501, 31},
      ISSN = {0022-2488,1089-7658},
   MRCLASS = {35Q55 (33C47 35C08 81Q80)},
  MRNUMBER = {4038553},
       DOI = {10.1063/1.5096896},
       URL = {},
}

@article {Dattoli:2024:HHO,
    AUTHOR = {Dattoli, Giuseppe and Garra, Roberto and Licciardi, Silvia},
     TITLE = {Hermite, higher order {H}ermite, {L}aguerre type polynomials
              and {B}urgers like equations},
   JOURNAL = {J. Comput. Appl. Math.},
  FJOURNAL = {Journal of Computational and Applied Mathematics},
    VOLUME = {445},
      YEAR = {2024},
     PAGES = {Paper No. 115821, 9},
      ISSN = {0377-0427,1879-1778},
   MRCLASS = {35Q53 (33C45 35K05 42C05 58J35)},
  MRNUMBER = {4708547},
       DOI = {10.1016/j.cam.2024.115821},
       URL = {},
}

@article {Hogan:2016:STS,
    AUTHOR = {Hogan, Jeffrey A. and Lakey, Joseph D.},
     TITLE = {Frame expansions of bandlimited signals using prolate
              spheroidal wave functions},
   JOURNAL = {Sampl. Theory Signal Image Process.},
  FJOURNAL = {Sampling Theory in Signal and Image Processing. An
              International Journal},
    VOLUME = {15},
      YEAR = {2016},
     PAGES = {139-153},
      ISSN = {1530-6429},
   MRCLASS = {94A17},
       DOI = {10.1007/BF03549602},
MRREVIEWER = {Borislav\ R.\ Draganov},
}

@article {Nogueira:2016:LCB,
    AUTHOR = {Nogueira, P. H. F. and de Castro, A. S. and Pimentel, D. R.
              M.},
     TITLE = {A large class of bound-state solutions of the {S}chr\"odinger
              equation via {L}aplace transform of the confluent
              hypergeometric equation},
   JOURNAL = {J. Math. Chem.},
  FJOURNAL = {Journal of Mathematical Chemistry},
    VOLUME = {54},
      YEAR = {2016},
    NUMBER = {6},
     PAGES = {1287-1295},
      ISSN = {0259-9791,1572-8897},
   MRCLASS = {81Q05 (34A25 44A10)},
  MRNUMBER = {3490943},
MRREVIEWER = {Albert\ Petrov},
       DOI = {10.1007/s10910-016-0621-z},
       URL = {},
}

@article {Singh:NSB:2023,
    AUTHOR = {Singh, P. K. and Saha Ray, S.},
     TITLE = {A novel study based on shifted {J}acobi polynomials to find
              the numerical solutions of nonlinear stochastic differential
              equations driven by fractional {B}rownian motion},
   JOURNAL = {Comput. Methods Appl. Math.},
  FJOURNAL = {Computational Methods in Applied Mathematics},
    VOLUME = {23},
      YEAR = {2023},
    NUMBER = {3},
     PAGES = {715-728},
      ISSN = {1609-4840,1609-9389},
   MRCLASS = {60H05 (60H20 60H30 60H35)},
  MRNUMBER = {4609816},
       DOI = {10.1515/cmam-2022-0187},
       URL = {},
}
\end{document}